\documentclass{ws-rv9x6}
\usepackage{subfigure}   
\usepackage{ws-rv-thm}   
\usepackage{ws-rv-van}   
\usepackage{float}
\usepackage{appendix}

\makeindex

\newcommand{\cross}[1]{%
\mbox{\vbox{\kern 1pt\hbox{\vbox{\hrule
\kern 2pt\hbox{\ensuremath{\vphantom{b}#1}\kern 2pt}}\vrule\kern 1pt}}}\,}

\begin{document}

\chapter[The Q-Calculus: A Quaternion-Based
Laws of Form System]{The Q-Calculus: A Quaternion-Based\\
Laws of Form System\label{ra_ch1}}

\author[Louis H. Kauffman and Arthur M. Collings]{Louis H. Kauffman and Arthur M. Collings\footnote{Independent Researcher}}


\address{
loukau@gmail.com\footnote{Department of Mathematics, Statistices and Computer Science, University of Illinois at Chicago, 
Chicago, IL USA }\\
otter@mac.com\footnote{P.O. Box 114, Red Hook, NY 12571}}

\begin{abstract}
This paper introduces a Laws of Form version of the Quaternions. We call this the Q-Calculus, a 16-valued extension of Laws of Form (LoF) which is closely related to the BF Calculus (where we have a single square root of the mark) and the concept of the square root of negation (related to the the square root of minus one). We construct Q as a system of LoF mark operators acting on 4-tuples, and prove that the set of eight operators in Q is isomorphic to the quaternion group, which is non-commutative. We give a novel proof of several of Q's distribution laws using non-commutative logic gates. We indicate how to represent Q as braids by associating elementary braids to square roots of negation. This results in a very concise representation of Q as LoF braids. We end the paper with an indication of how we can represent the Artin braid group in LoF and how we can generalize our work with the quaternions to Clifford algebras.\\
\end{abstract}


\body


\section{Introduction}
We assume that the reader is generally familiar with  the quaternions and with Spencer-Brown\textquotesingle s \emph{Laws of Form}\cite{LOF1} (LoF). This paper describes $Q$, a  16-valued   extension to LoF that is closely related to our earlier development of the $BF$ Calculus \cite{BF3, BF4}. $Q$ is based on adding three \emph{imaginary} marks,\\ $\cross{\ \ }_i$\ , $\cross{\ \ }_j$\ , and $\cross{\ \ }_k$ to the LoF Calculus such that  
\begin{equation}
\cross{\cross{\ \ }_i}_i = \cross{\ \ }, \quad \cross{\cross{\ \ }_j}_j = \cross{\ \ }, \quad \cross{\cross{\ \ }_k}_k = \cross{\ \ } , \quad 
\label{eq0000}
\end{equation}
\begin{equation}
 \cross{\cross{\ \ }_i}_j = \cross{\ \ }_k\ \label{eq0001}
\end{equation} 
\begin{equation}
     \cross{\cross{\ \ }_\alpha\ }\ = \cross{\cross{\ \ }\ }_\alpha \ \ for\ \alpha \in \{i,j,k\}\ ,
     \label{eq0010}
\end{equation}
where \cross{\ \ } is \emph{LoF\textquotesingle s} orginal mark. Per Equation \ref{eq0010}, \cross{\ \ } commutes with the $i,j,$ and $k$ operators. Based on these assumptions, the resulting set of operators is isomorphic to the 8-element quaternion group $Q_8$. The non-commutativity of $\cross{\ \ }_i$ and $\ \cross{\ \ }_j$ is shown as follows. 
\begin{equation}\label{step1}
  \cross{\cross{\cross{\ \ }_i}_j}_k = \cross{\cross{\ \ }_k}_k = \cross{ \ \ }\ \ \ (\ref{eq0001})  \ (\ref{eq0000})
\end{equation}
\begin{equation}\label{step2}
  \cross{\cross{\ \ }_i} = \cross{\cross{\cross{\cross{\ \ }_i}_i}_j}_k = \cross{\cross{\cross{\ \ } }_j}_k =  \cross{\cross{\cross{\ \ }_j }_k} \ \ \ (\ref{step1}) \ (\ref{eq0000})\ (\ref{eq0010})
\end{equation}
\begin{equation}\label{step3}
 Therefore,\ (\ref{step2})\longrightarrow \ \cross{\ \ }_i = \cross{\cross{\ \ }_j }_k
\end{equation}
\begin{equation}\label{step4}
 \ \cross{\ \ }_j = \cross{\cross{\ \ }_k}_i\ \ \  \ We\ leave\ this \ for\ the \ reader. 
\end{equation}
$$(Hint: Show \ that\ \cross{\cross{\cross{\ }_j}_k}_i = \cross{\ }) $$
\begin{equation}\label{step5}
 \cross{\cross{\ \ }_k} = \cross{\cross{\cross{\ \ }_k}_i}_i = \cross{\cross{\ \ }_j}_i\ \ \ (\ref{eq0000})\ (\ref{step4})
\end{equation}
\begin{equation}\label{step6}
 Then\ (\ref{eq0001})\ and \   (\ref{step5})\longrightarrow \cross{\cross{\ \ }_i}_j\ne \cross{\cross{\ \ }_j}_i 
\end{equation}

Note that we are using the expressions as operators. See Section \ref{sec333} for a rigorous discussion of this. 

We are saying that we can directly add not only one new operator that is a square root of the mark, but more than one as in $i,j,k$ above. We can make an arithmetic for these new marks where the concatenations of them as operators is isomorphic with the quaternions. At this preliminary stage of developing the arithmetic (Section \ref{sec20}), we do not yet give any identities for juxtaposition. For example there is no reduction formula for $\cross{\ \ }_i$  $\cross{\ \ }_j$. In this way we can make a fundamental arithmetic that is a precursor to both the usual quaternions and to species of Laws of Form quaternions, where we let these operators act on four-tuples of marked and unmarked states. This gives a specific version of the quaternions in a Laws of Form context as we shall see in Section \ref{sec333}.\\

In making these constructions, we introduce non-commutativity into LoF. This may seem a radical departure to the reader associated with Laws of Form. In this regard we point out that non-commutativity was surely a surprise to Hamilton himself when he discovered the quaternions in 1843. In fact Hamilton is famous for having written on the Brougham bridge in Dublin the equations $$ii = jj = kk = ijk = -1$$ when he had found them in the course of walking near the bridge. Note that these equations do not seem to imply a non-commutative algebra, but, assuming products are associative, they imply, from $ijk = -1$, that $jk = i$ and $ij = k$, and from this that $-k = ji$ so that $ij = -ji.$ In this way Hamilton would show that the quaternions need be non-commutative and he produced the very first non-commutative algebra in mathematics. Just so, we inevitably find non-commutativity in this quaternionic extension of Laws of Form.\\

We will show in Section \ref{sec20}  how the full quaternion group follows from these assumptions. Note that from the point of view of \emph{Laws of Form}, we are starting with the idea of a square root of the mark as in the algebra BF\cite{BF3}, and we are considering what will happen if another square root of the mark is introduced. We then make a very simple assumption about the composition of these two operators, namely that the composition is also a square root of the mark and that the mark commutes with these operators. Remarkably, this produces the quaternionic structure.

In Section \ref{sec333}, we formally define the n-tuples that form the basis for the Q Calculus.  Section \ref{sec30} provides a brief review of the BF Calculus and the concept of the square root of negation. In Section \ref{sec40}, we focus on several of the 56 distribution laws in Q, and give demonstrations that are novel due to their incorporation of anti-commuting logic gates.  Finally, in Section \ref{sec50} we look to the diagrammatic structure of the quaternion operators and the closely related Klein four group. Representing the 4-tuples in Q as strands in the braid group provides the basis for generalizing Q to higher orders.

\section{Laws of Form}\label{sec20}

\subsection{The LoF Mark and -1}
At the simplest level, LoF describes a system, the Calculus of Indications, in which every expression is equivalent to one of two states: the marked state, which is indicated by \cross{\ \  }\ , or the unmarked state, which is indicated by the absence of the mark. In this sense, LoF can be considered a two-state (or two-valued) system. We indicate these states by calling (referencing) their names, for example by writing $\cross{\ \ }$ to refer to the marked side of the distinction. In the calculus, calling such a name twice is no different than calling it once, which gives rise to the \emph{Law of Calling}: 

\begin{equation}
   Law\ of\ Calling:\ \cross{\ \ }\ \cross{\ \ }= \cross{\ \ }
\end{equation}

 A key idea in the Calculus of Indications is the concept of \emph{crossing} the boundary of a distinction. A distinction is a space (or concept) that is divided into two areas, one of which is indicated by being marked, and the other by being unmarked. LoF's mark, \cross{ \ \ } , can be considered an instruction to cross from  its inside (which is empty) to its outside, which we consider to be marked. Based on the simple geometry of a distinction, the  \emph{Law of Crossing} then states: 
\begin{equation}
   Law\ of\ Crossing:\  \cross{\cross{\ \ }} = \quad \quad . 
\end{equation}

The Law of Crossing is closely related to the form of \emph{Reflexion}: 

\begin{equation}
    Reflexion: \quad \quad  \cross{\cross{X}} = X. 
\end{equation}

One way to understand the Law of Crossing and the form of Reflexion is to observe that the positive and negative integers form the sides of a distinction that is bounded by 0. Negative integers in fact are marked with a sign of negation while positive integers are unmarked. 

Consider the act of multiplying an integer $x$ by $-1$. As  anyone familiar with rudimentary arithmetic will know, multiplying an integer $x$ by $-1$ results in a product of opposite sign: negative if $x$ is positive, and positive if $x$ is negative. Effectively, each case is equivalent to issuing an instruction to \emph{cross over} to the opposite side of the boundary designated by 0, with each resulting  in a change of sign. For example, if $x = 5$, then (-1)(5) = -5, which changes the sign of $x$ from unmarked to marked, while if $x = -5$, then (-1)(-5) = 5, changing the sign from marked to unmarked. In general, the product
\begin{equation*}
    (-1)(-1)x = x
\end{equation*}
 closely corresponds with Crossing and Reflexion.\\

 \noindent
 \emph{Exponental Expressions as Operators}. In many cases, operators in LoF are represented based on a form of containment, such as $\cross{X}$. It is also  possible to express operators in other forms. Consider, for example, the form $a^b$ for $b \in \{ \cross{\ \ }, \cross{\cross{\ \ }} \}.$ If we take by definition $a^{\cross{\ }} = \cross{a}$ and $a^{\cross{\cross{\ }}} = a$, then $a^b = \cross{\cross{a}b}\ \cross{\cross{b}a} = Xor(a,b)$. 

\subsection{The quaternion Group $Q_8$}\label{sec22}

The eight quaternion operators consisting of $\{\pm1, \pm i, \pm j, \pm k\}$ form the non-commutative group $Q_8$ under multiplication, in accord with the following:

\begin{equation}i^2 = j^2 = k^2 = ijk = -1
\label{eqQQ}
\end{equation}
$$ij = k,\ jk = i,\ ki = j$$
$$ji = -k,\ kj = -i,\ ik = -j.$$

\noindent
Likewise, in Q the eight operators  form a group that is isomorphic to $Q_8$: 
$$\cross{X}_i,\ \ \cross{X}_j,\ \ \cross{X}_k, \ \ \cross{X},$$ 
$$\cross{\cross{X}_i},\ \ \cross{\cross{X}_j},\ \ \cross{X}_k, \ \ X. $$  
Here we write each operator as a function of $X$ where $X$ can be replaced by another operator with which it is composed. Note that in this notation $X$ alone means the identity operation.\\

Then identities $\ref{eq1011}$, $\ref{eq1100}$, and $\ref{eq1101}$    are sufficient to establish the quaternion relations (\ref{eqQQ}) for $Q_8$. 
\begin{equation}
\cross{\cross{X}_i}_i = \cross{X}, \quad \cross{\cross{X}_j}_j = \cross{X}, \quad \cross{\cross{X}_k}_k = \cross{X} , \quad 
\label{eq1011}
\end{equation}
\begin{equation}
 \cross{\cross{X}_i}_j = \cross{X}_k\ \label{eq1100}
\end{equation} 
\begin{equation}
     \cross{\cross{X}_\alpha\ }\ = \cross{\cross{X}\ }_\alpha \ \ for\ \alpha \in \{i,j,k\}\ ,
     \label{eq1101}
\end{equation}
\newpage
\emph{QR 1:} \\

$\cross{\cross{\cross{X}_i}_j}_k$\\[10pt]
\begin{tabular}{@{\hspace{7ex}}l l }
    $= \cross{\cross{X}_k}_k$ &[By Equation \ref{eq1100},\ $\cross{\cross{X}_i}_j = \cross{X}_k$ ]\\[10pt]
    $= \cross{X}$ &[By Equation \ref{eq1011} ]\\[10pt]
\end{tabular}

\noindent
\emph{QR 2:} \\[10pt]
$\cross{\cross{\cross{X}_j}_k}$\\[10pt]
\begin{tabular}{@{\hspace{7ex}}l l }
    $= \cross{\cross{\cross{X}}_j}_k$ &[By Equation \ref{eq1101},\ \cross{\ \ } commutes with $\cross{\ \ }_j$ and $\cross{\ \ }_k$]\\[10pt]
$= \cross{\cross{\cross{\cross{X}_i}_i}_j}_k$ &[By Equation \ref{eq1011},\ $\cross{\cross{X}_i}_i = \cross{X}$]\\[10pt]
$= \cross{\cross{X}_i}$ &[By QR 1,  $\cross{\cross{\cross{X}_i}_j}_k = \cross{X}$]\\[10pt]
$= \cross{\cross{\cross{X}_i}_j}_j$ &[By Equation \ref{eq1011},\ $\cross{\cross{X}_j}_j = \cross{X}$\\[10pt]
$= \cross{\cross{X}_k}_j$ &[By Equation \ref{eq1100},\ $\cross{\cross{X}_i}_j = \cross{X}_k$]\\[10pt]
\end{tabular}

\noindent
\emph{QR 3:} \\[10pt]
$\cross{\cross{X}_j}_k$\\[10pt]
\begin{tabular}{@{\hspace{7ex}}l l }
    $=\cross{\cross{\cross{\cross{X}_j}_k}}$ &[\cross{\cross{X}} = X]\\[10pt]
    $=\cross{\cross{\cross{X}_i}}$ &[QR 2]\\[10pt]
    $=\cross{X}_i$ &[\cross{\cross{X}} = X]\\[10pt]
\end{tabular}

We note that QR 2 and 3 establish that the $\cross{\ \ }_j$ and $\cross{\ \ }_k$ operations anti-commute. The remaining identities for $Q_8$ (\ref{eqQQ}) are established in similar fashion, and we leave their confirmation to the reader. 

\section{Formal 4-Tuple Operators}\label{sec333}

The quaternions are a hyper-complex algebra discovered by William Rowan Hamilton in 1843 that extends the concept of complex numbers to four dimensions. As will be familiar to most readers, quaternions are most frequently expressed in the form
\begin{equation}
   q = a + bi + cj + dk
\end{equation}
where $a,b,c,d$ are real numbers, and the operators $i$, $j$, and $k$ are defined such that $i^2 = j^2 = k^2 = ijk = -1$ and $ij = k$, $jk = i$, and $ki = j$. Notably, the quaternions do not commute, with $ij = -ji$, $jk = -jk$,  and $ki = - ik$. \\

\noindent
Now, consider multiplying $q$ on the right by $i$, $j$, and $k$. We will abstract the definition of the imaginary operators in $Q$ from these patterns. 

\begin{align*}
    qi &= (a + bi + cj + dk)\ i \\
    &= ai + bii + cji + dki\\
        &=ai + -b + -ck + dj \\
        &=-b + ai + dj + -ck
\end{align*}\\[-18pt]
\begin{align*}
    qj &= (a + bi + cj + dk)\ j \\
    &= aj + bij + cjj + dkj\\
        &=aj + bk + -c + -di \\
        &=-c + -di + aj + bk
\end{align*}\\[-18pt]
\begin{align*}
    qk &= (a + bi + cj + dk)\ k \\
    &= ak + bik + cjk + dkk\\
        &=ak + -bj + ci + -d \\
        &= -d + ci + -bj + ak
\end{align*}\\[-18pt]

\noindent
The reader should observe that negative signs and LoF marks correspond with Definition \ref{def1} on a tuple-by-tuple basis. The LoF mark is closely related to the numeric operation of multiplying by negative 1.\\

\begin{definition} The operators $\cross{\ \ }_i$\ , $\cross{\ \ }_j$\ , and $\cross{\ \ }_k$ acting on 4-tuples $X =(a,b,c,d)$ in $Q$ are defined as follows: \\
\begin{align}
    \cross{(a,b,c,d)}_i &= (\cross{b}, a, d, \cross{c})
    \label{eqiop}
\end{align}
\begin{align}
    \cross{(a,b,c,d)}_j &= (\cross{c}, \cross{d}, a, b)    \label{eqjop}
\end{align}
\begin{align}
    \cross{(a,b,c,d)}_k &= (\cross{d}, c, \cross{b}, a)    \label{eqkop}
\end{align}
\begin{align}
    \cross{(a,b,c,d)} &= (\cross{a}, \cross{b}, \cross{c}, \cross{d})    \label{eqcross}
\end{align}\\
where within each tuple, the operation $\cross{x}$ is defined in strict accord with its  definition in LoF.
\label{def1}
\end{definition}

Definition \ref{def1}  formally defines Q operators based on 4-tuples. It is straightforward to show that, acting on 4-tuples,  the $\cross{\ \ }_i$, $\cross{\ \ }_j$, and $\cross{\ \ }_k$ operators also form an 8-element group that is isomorphic to $Q_8$, as stated in the  following theorem. Note that per the containment relation, $\cross{\cross{ X}_i}_j$ means that $\cross{\ \ }_j$ $contains$  $\cross{X}_i$, while $\cross{\cross{ X}_j}_i$ means that $\cross{\ \ }_i$ $contains$  $\cross{X}_j$.
\begin{theorem}
    Acting on 4-tuples under the containment relation  defined in Definition \ref{def1}, the $\cross{\ \ }_i$, $\cross{\ \ }_j$, and $\cross{\ \ }_k$ operators generate an 8-element group that is isomorphic to $Q_8$. 
\end{theorem}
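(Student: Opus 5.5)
The plan is to reduce the theorem to two facts. First, the tuple operators of Definition \ref{def1} satisfy the defining identities (\ref{eq1011}), (\ref{eq1100}) and (\ref{eq1101}). Second, the eight resulting operators are pairwise distinct. The derivations QR 1--QR 3 of Section \ref{sec22}, together with the remaining identities left to the reader there, use only (\ref{eq1011})--(\ref{eq1101}) and Reflexion. So once the first fact is checked on 4-tuples, every relation of $Q_8$ in (\ref{eqQQ}) holds for the tuple operators. The second fact then prevents the group from collapsing to a proper quotient.

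\textbf{Verifying the identities.} Recall that $\cross{\cross{X}_i}_j$ means: apply $\cross{\ \ }_i$ first, then $\cross{\ \ }_j$. The checks are direct substitutions into (\ref{eqiop})--(\ref{eqcross}), using Reflexion $\cross{\cross{x}}=x$ in each coordinate.
\begin{itemize}
\item For (\ref{eq1011}) with $i$: two applications give $(\cross{b},a,d,\cross{c})\mapsto(\cross{a},\cross{b},\cross{c},\cross{d})$, which is $\cross{(a,b,c,d)}$. The $j$ and $k$ cases are analogous.
\item For (\ref{eq1100}): applying $\cross{\ \ }_j$ to $(\cross{b},a,d,\cross{c})$ gives $(\cross{d},c,\cross{b},a)$, which is exactly $\cross{(a,b,c,d)}_k$ by (\ref{eqkop}).
\item For (\ref{eq1101}): each imaginary operator is a signed permutation of coordinates, and the plain mark marks every coordinate. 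Hence the two commute, by coordinatewise Reflexion.
\end{itemize}
Along the way I would record that every operator involved is a bijection on 4-tuples, because $\cross{\ \ }_\alpha$ applied four times is the identity. So the operators do generate a group under composition.

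\textbf{Group-theoretic step.} Write $x=\cross{\ \ }_i$ and $y=\cross{\ \ }_j$. The identities give $x^2=y^2=\cross{\ \ }$, and $\cross{\ \ }$ is central of order $2$, so $x^4=1$. They also give $xy=\cross{\ \ }_k$, and QR 2 and QR 3 together with their analogues give $yxy^{-1}=x^{-1}$. These are the relations of the standard presentation $Q_8=\langle x,y\mid x^4=1,\ x^2=y^2,\ yxy^{-1}=x^{-1}\rangle$. Therefore the generated group $G$ is a homomorphic image of $Q_8$, and its elements are among the eight listed operators $X,\ \cross{X},\ \cross{X}_\alpha,\ \cross{\cross{X}_\alpha}$.

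\textbf{Distinctness, the main obstacle.} The remaining issue is showing that $G$ is not a proper quotient of $Q_8$. The worry is that some of the eight operators might coincide as functions. I would evaluate each one on a tuple $(a,b,c,d)$ of independent variables, or concretely on the tuple whose coordinates are one marked entry and three unmarked ones in suitable positions. The eight images are then visibly different signed permutations: the underlying permutations are the identity and the three fixed-point-free involutions $(ab)(cd)$, $(ac)(bd)$ and $(ad)(bc)$, each appearing twice, and the two operators with the same permutation differ by a global mark. Hence $|G|=8$, the surjection $Q_8\to G$ is injective, and $G\cong Q_8$.
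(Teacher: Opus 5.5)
Your proof is correct, but it is organized differently from the paper's. The paper writes down the correspondence $1\mapsto\cross{\cross{X}}$, $-1\mapsto\cross{X}$, $i\mapsto\cross{X}_i$, and so on. It then verifies quaternion identities one at a time by explicit tuple computations (QCC, QII, QIJ, QIJK, QJI, QMC, QINV), leaving the remaining ones to the reader. It never argues that the eight functions are pairwise distinct, so the fact that the generated group really has eight elements is left implicit.

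You check on tuples only the three generating identities (\ref{eq1011})--(\ref{eq1101}), which are essentially the paper's QII, QIJ and QMC. From these you obtain every other relation, closure, and the bound $|G|\le 8$ in one stroke, via the presentation $\langle x,y\mid x^4=1,\ x^2=y^2,\ yxy^{-1}=x^{-1}\rangle$ and von Dyck's theorem. You then get $|G|\ge 8$ from distinctness. Your route therefore gives a complete proof with less computation, and it makes explicit the injectivity step that the paper omits. The paper's route instead displays each quaternion identity, for instance the anticommutation QJI, as concrete LoF arithmetic inside the tuples, which is what the authors want to showcase.

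Two small simplifications are available to you.

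\begin{enumerate}
\item You do not need the unproved ``analogues'' of QR 2--3. Write $m$ for $\cross{\ \ }$. The relations $xy=\cross{\ \ }_k$ and $\cross{\ \ }_k^2=m=x^2$ give $xyxy=x^2$. Hence $yxy=x$, and so $yxy^{-1}=xy^{-2}=xm=x^{-1}$.
\item Distinctness needs only one input. No operator other than the identity fixes any tuple: each non-identity operator either marks every coordinate, or swaps two coordinates while marking exactly one of them. So the eight operators send, for example, $(\cross{\ },\ ,\ ,\ )$ to eight different tuples.
\end{enumerate}
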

\begin{proof}
      The eight expressions  generated  by $\cross{\ \ }_i$, $\cross{\ \ }_j$, and $\cross{\ \ }_k$, form a closed set under containment operations. For $X = (a,b,c,d),$ the eight expressions consist  of  the functions of 
      $X = (a,b,c,d):$
      $$\cross{X},\ \cross{\cross{X}},\ \cross{X}_i, \ \cross{\cross{X}_i},\ \cross{X}_j,\  \cross{\cross{X}_j}\ 
      \cross{X}_k,\  \cross{\cross{X}_k}\ ,$$
where the eight  elements in $Q_8$ map to these functions as follows: 
    $$-1 \rightarrow \cross{X}, \ 1\rightarrow \cross{\cross{X}},\ i \rightarrow \cross{X}_i, \ -i \rightarrow \cross{\cross{X}_i},$$
    $$j \rightarrow \cross{X}_j, \ -j \rightarrow \cross{\cross{X}_j},\ k \rightarrow  \cross{X}_k, \ -k \rightarrow \cross{\cross{X}_k},$$

To prove this mapping is operation preserving, we show that the quaternion identities listed in (\ref{eqQQ}) are satisfied by the corresponding $Q$ expressions.\\ 

\noindent
QCC\ \ \ $\cross{\cross{(a,b,c,d)}}$ \quad \quad \quad \quad \quad  \\[10pt]
\begin{tabular}{@{\hspace{9ex}}l l }
    $= \cross{(\cross{a},\cross{b},\cross{c},\cross{d})}$ &[Definition \ref{def1}(\ref{eqcross})]\\[10pt]
    $= (\cross{\cross{a}},\cross{\cross{b}},\cross{\cross{c}},\cross{\cross{d}})$ &[Definition \ref{def1}(\ref{eqcross})]\\[10pt]
    $= (a,b,c,d)$ &[In each tuple, \cross{\cross{x}} = x in LoF]\\[16pt]
\end{tabular}

\noindent
QII.\ \ \ $\cross{\cross{(a,b,c,d)}_i}_i$\quad \quad \quad \quad $i^2 = -1$\\[10pt]
\begin{tabular}{@{\hspace{9ex}}l l }
    $= \cross{(\cross{b},a,d,\cross{c})}_i$ &[Definition \ref{def1}(\ref{eqiop})]\\[10pt]
    $=(\cross{a},\cross{b},\cross{c},\cross{d})$ &[Definition \ref{def1}(\ref{eqiop})]\\[10pt]
        $= \cross{(a,b,c,d)}$ &[Definition \ref{def1}(\ref{eqcross})]\\[16pt]
\end{tabular}

Demonstrations of QJJ and QKK, which correspond to $j^2 = -1$ and $k^2 = -1$ are similar and are left to the reader. \\

\noindent
QIJ\ \ \ $\cross{\cross{(a,b,c,d)}_i}_j$ \quad \quad \quad \quad $ij = k$\\[10pt]
\begin{tabular}{@{\hspace{9ex}}l l }
    $= \cross{(\cross{b},a,d,\cross{c})}_j$ &[Definition \ref{def1}(\ref{eqiop})]\\[10pt]
    $= (\cross{d},\cross{\cross{c}},\cross{b},a)$ &[Definition \ref{def1}(\ref{eqjop})]\\[10pt]
    $= (\cross{d},c,\cross{b},a)$ &[\ \cross{\cross{x}} = x for any x in LoF]\\[10pt]
    $= \cross{(a,b,c,d)}_k$ &[Definition \ref{def1}(\ref{eqkop})]\\[16pt]
\end{tabular}

\newpage
\noindent
QIJK\ \ \ $\cross{\cross{\cross{(a,b,c,d)}_i}_j}_k$ \quad \quad \quad \quad $ijk = -1$\\[10pt]
\begin{tabular}{@{\hspace{9ex}}l l }
    $= \cross{\cross{(a,b,c,d)}_k}_k$ &[QIJ]\\[10pt]
    $= \cross{(\cross{d},c,\cross{b},a)}_k$ &[Definition \ref{def1}(\ref{eqkop})]\\[10pt]
    $=(\cross{a},\cross{b},\cross{c},\cross{d})$ &[Definition \ref{def1}(\ref{eqkop})]\\[10pt]
    $= \cross{(a,b,c,d)}$ &[Definition \ref{def1}(\ref{eqcross})]\\[16pt]
\end{tabular}

QJI\ \ \ $\cross{\cross{(a,b,c,d)}_j}_i$\quad \quad \quad \quad $ji = -ij$\\[10pt]
\begin{tabular}{@{\hspace{9ex}}l l }
    $= \cross{(\cross{c}, \cross{d},a,b)}_i$ &[Definition \ref{def1}(\ref{eqjop})]\\[10pt]
    $= (\cross{\cross{d}}, \cross{c},b,\cross{a})$ &[Definition \ref{def1}(\ref{eqiop})]\\[10pt]
    $= (\cross{\cross{d}}, \cross{c},\cross{\cross{b}},\cross{a})$ &[\ \cross{\cross{x}} = x for any x in LoF]\\[10pt]
    $= \cross{(\cross{d}, c,\cross{b}, a)}$ &[\ Definition \ref{def1}(\ref{eqcross})]\\[10pt]
    $= \cross{\cross{(a,b,c,d)}_k}$ &[Definition \ref{def1}(\ref{eqkop})]\\[10pt]
        $= \cross{\cross{\cross{(a,b,c,d)}_i}_j}$ &[QIJ]\\[16pt]
\end{tabular}

\noindent
QMC.\ \ \ $\cross{\cross{(a,b,c,d)}_i}$\quad \quad \quad \quad (The LoF Mark commutes)\\[10pt]
\begin{tabular}{@{\hspace{9ex}}l l }
    $= \cross{(\cross{b},a,d,\cross{c})}$ &[Definition \ref{def1}(\ref{eqiop})]\\[10pt]
    $=(\cross{\cross{b}},\cross{a},\cross{d},\ \cross{\cross{c}})$ &[Definition \ref{def1}(\ref{eqcross})]\\[10pt]
    $= \cross{(
    \cross{a},\cross{b},\cross{c},\cross{d})}_i$ &[Definition \ref{def1}(\ref{eqiop})]\\[16pt]
    $= \cross{\cross{(a,b,c,d)}}_i$ &[Definition \ref{def1}(\ref{eqcross})]\\[16pt]
\end{tabular}

\noindent
QINV.\ \ \ $\cross{\cross{\cross{(a,b,c,d)}_\alpha}_\alpha}$\quad \quad \quad \quad (Showing $\cross{\ \ }_\alpha$ has an inverse)\\[10pt]
\begin{tabular}{@{\hspace{9ex}}l l }
    $= \cross{\cross{(a,b,c,d)}}$\quad \quad \quad \ \   &[$\cross{\cross{X}_\alpha}_\alpha = \cross{X}\ for\ \alpha = i, j, k$]\\[10pt]
    $= (a,b,c,d)$ &[QCC: $\cross{\cross{X}}= X$].\\[10pt]
\end{tabular}

We leave the remaining demonstrations to the reader.  
\end{proof}

\subsection{Juxtaposition}
In the preceding we have focused on describing enclosures made by the marks \cross{\ \ }, $\cross{\ \ }_i$, $\cross{\ \ }_j$, and $\cross{\ \ }_k$. We know define the \emph{juxtaposition} of any two expressions in $Q$. 

\begin{definition} \label{def2}
For any two expressions $X = (a,b,c,d)$ and $Y = (e,f,g,h)$, their juxtaposition $XY$ is defined on a tuple-wise basis, with
    \begin{equation}
        XY = (a,b,c,d)\ (e,f,g,h) = (ae, bf, cg, dh)
    \end{equation}
where $ae, bf, cg$, and $dh$ are the corresponding juxtapositions as defined in LoF within each of the 4-tuples. 
\end{definition}
Juxtapositions in  LoF generalize the \emph{Law of Calling} and the form of \emph{Integration (Int.)} in LoF. For example: 

\begin{example}\\[10pt]
\noindent
$(\cross{\ }, \cross{\ }, \ \ , \cross{\ } )\ (a,b,c,  \ \cross{\ })$\\[10pt]
\begin{tabular}{@{\hspace{7ex}}l l }
   $= (\cross{\ }\ a, \cross{\ } b,\ c , \ \cross{\ }\ \cross{\ } )$ &[By Definition \ref{def2}]\\[10pt]
    $= (\cross{\ },\ \cross{\ },\ c , \ \cross{\ }\ \cross{\ } )$ &[Applying Int. (\cross{\ }\ x = \cross{\ })\ \emph{within} LoF]\\[12pt]
    $= (\cross{\ },\ \cross{\ },\ c , \ \cross{\ } )$ &[Applying the Law of Calling \emph{within} LoF]\\[16pt]
\end{tabular}
\end{example}
\newpage
\begin{example}\\[10pt]
  \noindent
$\cross{(a,b,c,d) }_i\ \ \cross{(a,c,b,d)}_j$\\[10pt]
\begin{tabular}{@{\hspace{7ex}}l l }
   $ = (\cross{b},a,d,\cross{c}) \ \ \cross{(a,c,b,d)}_j$ &[Definition \ref{def1}(\ref{eqiop})]\\[10pt]
   $ = (\cross{b},a,d,\cross{c}) \ \ (\cross{b}, \cross{d}, a,c)$ &[Definition \ref{def1}(\ref{eqjop})]\\[10pt]
$ = (\cross{b} \cross{b},\ a\ \cross{d},\ ad,\ \cross{c} c)$ &[Definition \ref{def2}]\\[10pt] 
$ = (\cross{b},\ a\ \cross{d},\ ad,\ \cross{\ \ } )$ &[Simplifying each tuple in LoF]\\[10pt] 
\end{tabular}  
\end{example}

The preceding  gives us a basis for defining whether two expressions in $Q$ are equivalent. 

\begin{definition}
    Two expressions $A = (a_1, a_2, a_3, a_4)$ and $B = (b_1, b_2, b_3, b_4)$ are equivalent ($A = B$) if and only if for all $x$, $a_x = b_x$ \emph{as expressions in LoF}. 
\end{definition}

\begin{remark}
    
In LoF, two expressions $a_x$ and $b_x$ are equivalent if the following are true. Due to the LoF Completeness Theorem, if either (1) or (2) is true then both are true.  
\begin{enumerate}
    \item A comprehensive analysis of cases (such as a truth table) proves that $a_x = b_x$ in every  case. 
    \item Alternately, $a_x = b_x$ is demonstrable by applying LoF\textquotesingle s algebraic initials and consequences in a finite series of steps to obtain $b_x$ from $a_x$. 
\end{enumerate}

\end{remark}

\subsection{Notation}
The notation we have chosen in Q affords  considerable degree of flexibility for representing expressions. This is a good point to review some of these options,  beginning with treatment of the umarked state. 
\begin{itemize}
    \item Consider the unmarked state $U = (\ \ , \ \ , \ \ , \ \ )$. If the mark encloses $U$, then we have $\cross{(\ \ , \ \ , \ \ , \ \ )} = (\cross{\ }, \cross{\ }, \cross{\ }, \cross{\ })$. Frequently in the case of the unmarked state, we may choose to write $U = \ \ \ .$ Then in case of enclosure by the mark, we have $\cross{ \ \ \ } = (\cross{\ }, \cross{\ }, \cross{\ }, \cross{\ })$. \\
    \item Note that the empty tuple $U = (\ \ , \ \ , \ \ , \ \ )$ is a specific form of unmarked state for tuples. We can interpret the usual Laws of Form mark as acting on this void without ambiguity.\\
    \item This consideration does not quite apply to every  type of mark enclosing the empty state. For example, the expression $\cross{\ \ }_i$  is operating on the empty state, and we could write $\cross{\ \ }_i = \cross{(\ \ , \ \ , \ \ , \ \ )}_i = (\cross{\ }, \ \ , \ \ , \cross{\ }).$  However, the left side of this equation is an operator, while the right side apparently is not. In order to permit this equivalence, we resolve  the discrepancy by adopting the following definitions  incorporating exponential forms of operation:  
         \begin{equation}
            \cross{\ \ }_i = (\cross{\  },\ ,\ ,\cross{\ })
        \end{equation}
        \begin{equation}
            (a,b,c,d)^{\cross{\ }_i}= \cross{(a,b,c,d)}_i = (\cross{b},a ,d , \cross{c})
        \end{equation}
        \begin{equation}
            (a,b,c,d)^{ (\cross{\  },\ ,\ ,\cross{\ })}= (a,b,c,d)^{\cross{\ }_i} = (\cross{b},a ,d , \cross{c}).
        \end{equation}
\end{itemize}

A second consideration involves exponents.   
\begin{itemize}
    \item For $\cross{\cross{X}_i}_i = \cross{X}$, we can equivalently write $\cross{X}_i^2 = \cross{X}$, and likewise the same applies for $\cross{X}_j^2 = \cross{X}$ and $\cross{X}_k^2 = \cross{X}$.\\
    \item Furthermore, $\cross{\cross{\cross{\cross{X}_i}_i}_i}_i$ can be re-written as $\cross{X}_i^4$.  Since $\cross{\cross{X}_i}_i = \cross{X}$, it follows immediately that $\cross{X}_i^4 = \cross{\cross{X}} = X$, which is true for $\cross{X}_j^4$ and $\cross{X}_k^4$ as well.\\
    \item Finally, we see that $\cross{X}_i^3 = \cross{\cross{\cross{X}_i}_i}_i = \cross{\cross{X}_i}$, so we can write $\cross{X}_i^3$ for $\cross{\cross{X}_i}$ and $\cross{\cross{X}}_i$.  The same applies for $\cross{\ \ }_j^3$ and $\cross{\ \ }_k^3$.
\end{itemize}

\section{The BF Calculus}\label{sec30}
The Q Calculus is closely related to the BF Calculus, in the same way that the quaternions are related to the complex numbers. $BF$ is a 4-valued extension to $LoF$, defined as follows. 
\begin{definition}
\begin{enumerate}
    \item Expressions in $BF$ take the form of pairs $(a,b)$, where $a$ and $b$ are expressions in $LoF$ .\\
    \item The mark \cross{(a,b)} is re-defined, with  
    \begin{equation}
        \cross{(a,b)} = (\cross{a}, \cross{b}).
    \end{equation}
\item  The juxtaposition of two expressions $A$ and $B$, is defined on  a pairwise basis, so that 
    \begin{equation}
        AB = (a_1, a_2)(b_1, b_2) = (a_1\ b_1,\ a_2\ b_2).\\[6pt]
    \end{equation}
    \item The $imaginary$ mark $\cross{\ \ }_i$, or \emph{Square Root of Negation}, is introduced as a new enclosure-based operator, with the following definition 
    \begin{equation}
        \cross{(a,b)}_i = (\cross{b}, a),
    \end{equation}
    which satisfies the equation 
        \begin{equation}
            \cross{(a,b)}_i^2 = \cross{(a,b)}.
    \end{equation}
\end{enumerate}
\end{definition}
The imaginary mark corresponds directly to Hamilton\textquotesingle s representation of multiplying imaginary numbers, which takes the form of  rotations in the complex plane: 
\begin{equation}
    (a,b)i = (-b,a).
\end{equation}
The term \emph{Square Root of Negation} was originally suggested by Kauffman\cite{kau4} and later developed by the authors as $BF$\cite{BF3, BF4}. Important results described in these papers include proving completeness theorems for $BF$, establishing that $BF$ is closely related to Kauffman\textquotesingle s \emph{Waveform Algebra}\cite{kau1} and Belnap\textquotesingle s bilattice \emph{Four}\cite{BILx1, BILx4, BILx2}, while further establishing that certain normal modal operations\cite{Mod1, Mod5} that can be expressed in the form of a bilattice can also be constructed as expressions in BF\cite{bfmodal}. A key aspect of $BF$ is that it satisfies the form of \emph{Split Generation}, a form that does not exist in LoF. 
\begin{equation}
    Split\ Generation. \quad \quad \cross{\cross{A}_i\ B}_i\ C = \cross{\cross{AC}_i\ B}_i\ C
\end{equation}

\subsection{The I, J, and K subspaces of Q}\label{sec41}
Consider the set $I$ of expressions in $Q$ that include only  $\cross{X}_i$ and $\cross{X}$ as containment operators, with  no instances of $\cross{X}_j$ or $\cross{X}_k$. Then $I$ can be regarded as a 16-valued extension of $BF$, and it follows that if $F$ and $G$ are two expressions such that $F = G$ in $BF$, then $F = G$ in $I$. The same is also true for the corresponding sets $J$ (expressions with no instances of  $\cross{X}_i$ or $\cross{X}_k$) and $K$ (expressions with no instances of $\cross{X}_i$ or $\cross{X}_j$).\\

In this regard, the $Q$ Calculus can be regarded as three distinct but interrelating sub-algebras, each of which is a 16-valued instance of $BF$, but none of which are complete in and of themselves. Lack of completeness of the individual I, J, and K sub-algebras can be easily discerned by  observing that the permutations underlying BF are strictly limited to the single type $(a,b) \longrightarrow (b,a)$, whereas the  permutations underlying Q  include  the entire Klein Four group, as will be  discussed further in Section \ref{sec50}. In the appendix, we show by example that arbitrary permutations can be constructed in the form of Q expressions.  

\section{Distribution Axioms}\label{sec40}
The number and character of distribution laws in $Q$ is quite remarkable, and this gives us incentive to examine these laws carefully, highlighting the role played by non-commutative marks in their demonstration. 
\subsection{LoF}
We observe that in LoF the following distribution equations are satisfied, based on LoF\textquotesingle s usual interpretation that $A \vee B = AB$ and $A \wedge B = \cross{\cross{A}\cross{B}} $.

\begin{equation*}
    (a \wedge b) \vee c = (a \vee c) \wedge (b\vee c)
\end{equation*}
\begin{equation*}
    (a \vee b) \wedge c = (a \wedge c) \vee (b \wedge c) . 
\end{equation*}\\
The first corresponds to Spencer-Brown\textquotesingle s form of Transposition  (which he assumes as an algebraic initial), while the second  is less frequently seen but is easily demonstrated. 

\begin{align}
    \ \quad \quad \cross{\cross{A}\cross{B}}C &= \cross{\cross{AC}\cross{BC}}\\
    \quad \quad \cross{\cross{AB}\cross{C}} &= \cross{\cross{A}\cross{C}}\ \cross{\cross{A}\cross{C}}.
    \label{eqdist2}
\end{align}

Two additional distribution instances, 
   $$(a \wedge b) \wedge c = (a \wedge c ) \wedge (b \wedge c)$$
    $$(a \vee b) \vee c = (a \vee c) \vee (b \vee c)$$

\noindent
are also satisfied, but they are trivial. Ignoring the trivial cases, we conclude that LoF satisfies two distribution laws.

 \subsection{BF}

Based on its underlying bilattice characteristics, $BF$ adds two new logical operations, which we will designate as $\vee_i$ and $\wedge_i$. 

\begin{equation}
    A \vee_i B = \cross{ \cross{\ A}_i^3 \ \cross{\ B}_i^3 \ }_i
    \label{veei}
\end{equation} 
\begin{equation}
    A \wedge_i B = \cross{\cross{\ A}_i \cross{\ B}_i }_i^3
    \label{wedgei}
\end{equation}

These additions increase the number of conjunction and disjunction operators to four: $\wedge,\ \vee,\ \wedge_i,\ \vee_i$. Ignoring the four trivial cases, the number of distribution laws increases accordingly to $n \times (n-1) = 4 \times 3 = 12$.

\begin{equation}
    \cross{ \cross{\ A}_i^3 \ \cross{\ B}_i^3 \ }_i \ C = \cross{ \cross{\ A\ C}_i^3 \ \cross{\ B\ C}_i^3 \ }_i 
    \label{eqdistbf1}
\end{equation} 
\begin{equation}
    \cross{\cross{\ A}_i \cross{\ B}_i }_i^3\ C = \cross{\cross{\ A\ C}_i \cross{\ B\ C}_i }_i^3 
    \label{eqdistbf2}
\end{equation}\\
Equation (\ref{eqdistbf1}) corresponds with $(a \vee_i b)\ \vee c$, while Equation (\ref{eqdistbf2}) corresponds with $(a \wedge_i b)\ \vee c$. All 12  distributive combinations are demonstrable in $BF$. 

It helps to understand that $\vee_i$ and $\wedge_i$ operate like disjunction and conjunction in the bilattice context, which is to say, when it is assumed that $True = (\ , \cross{\ \ })$ and $False = (\cross{\ \ }, \ )$. We strongly recommend that the reader review our discussions of bilattices in (\citenum{BF3}) and (\citenum{BF4}). 

\subsection{Q}

$Q$ adds four additional logical operations, which we will designate as 
\begin{equation}
    A \vee_j B = \cross{ \cross{\ A}_j^3 \ \cross{\ B}_j^3 \ }_j 
    \label{veej}
\end{equation} 
\begin{equation}
    A \wedge_j B = \cross{\cross{\ A}_j \cross{\ B}_j }_j^3
    \label{wedgej}
\end{equation}
\begin{equation}
    A \vee_k B = \cross{ \cross{\ A}_k^3 \ \cross{\ B}_k^3 \ }_k 
    \label{veek}
\end{equation} 
\begin{equation}
    A \wedge_k B = \cross{\cross{\ A}_k \cross{\ B}_k }_k^3
    \label{wedgek}
\end{equation}

Equation (\ref{eqdistbf1}) and (\ref{eqdistbf2}) are also valid for the $\cross{\ \ }_j$ and $\cross{\ \ }_k$ operators. Consequently,  

\begin{equation}
    \cross{ \cross{\ A}_\alpha^3 \ \cross{\ B}_\alpha^3 \ }_\alpha \ C = \cross{ \cross{\ A\ C}_\alpha^3 \ \cross{\ B\ C}_\alpha^3 \ }_\alpha 
    \label{eqdistbf3}
\end{equation}

\begin{equation}
    \cross{\cross{\ A}_\alpha \cross{\ B}_\alpha }_\alpha^3\ C = \cross{\cross{\ A\ C}_\alpha \cross{\ B\ C}_\alpha }_\alpha^3 
    \label{eqdistbf4}
\end{equation}\\
for $\alpha \in \{i,j,k\}$.
ignoring the trivial cases, the number of distribution laws increases in $Q$ to $56\ (8\times 7).$ The following demonstrations are particularly interesting due to their use of non-commuting quaternion operations. \\ 

$A \vee_{i} (B \wedge_{j} C)$\\[10pt]
\begin{tabular}{@{\hspace{7ex}}l l }
$ = \cross{\cross{A}^3_i\ \cross{ B \wedge_j C}_i^3\ }_i$ &[Translation per (\ref{veei})] \\[5pt]
$ = \cross{\cross{A}^3_i\ \cross{\ \cross{\cross{ B}_j\ \cross{ C}_j  }^3_j\ }_i^3\ }_i$ &[Translation per (\ref{wedgei})] \\[5pt]
$ = \cross{\cross{A}_i^3\  \cross{\ \cross{B}_j\ \cross{ C}_j \ }^3_k\ }_i$ &[$-j \times -i = -k$] \\[5pt]
$ = \cross{\cross{A}_i^3\  \cross{\ \cross{ \cross{B}_i^3}_k\ \cross{ \cross{ C}_i^3}_k \ }^3_k\ }_i$ &[$j = -i \times k$] \\[5pt]
$ = \cross{  \cross{\ \cross{ \ \cross{A}_i^3\ \cross{B}_i^3}_k\ \cross{\ \cross{A}^3_i\ \cross{ C}_i^3}_k \ }^3_k\ }_i$ &[Applying Eq. (\ref{eqdistbf4}), $\alpha = k$] \\[5pt]
$ =  \cross{  \cross{ \ \cross{A}_i^3\ \cross{B}_i^3}_k\ \cross{\ \cross{A}_i^3\ \cross{ C}_i^3}_k }^3_j$ &[$-k\times i = - j$] \\[5pt]
$ =  \cross{  \cross{ \ \cross{ \cross{A}_i^3\ \cross{B}_i^3\ }_i}_j\  \cross{\ \cross{\ \cross{A}_i^3\ \cross{ C}_i^3\ }_i }_j }^3_j$ &[$k = i \times j$] \\[5pt]
$= (A \vee_i B ) \wedge_j (A \vee_i C)$ &[Translation]\\[10pt]
\end{tabular}\\[5pt]

\newpage

$(A \wedge_k B) \wedge_j C$\\[10pt]
\begin{tabular}{@{\hspace{7ex}}l l }
    $= \cross{\cross{A \wedge_k B}_j\ \cross{C}_j}_j^3$ &[Translation per (\ref{wedgej})] \\[5pt]
$= \cross{\cross{\cross{\cross{A}_k \ \cross{B}_k\ }_k^3}_j\ \cross{C}_j}_j^3$ &[Translation per (\ref{wedgek})] \\[5pt]
$= \cross{\cross{\cross{A}_k \ \cross{B}_k\ }_i\ \cross{C}_j}_j^3$ &[$-k \times j = i$] \\[5pt]
$= \cross{\cross{\cross{\cross{A}_j}_i^3 \ \cross{\cross{B}_j}_i^3\ }_i\ \cross{C}_j}_j^3$ &[$k = j \times -i $] \\[5pt]
$= \cross{\cross{\cross{\cross{A}_j\ \cross{C}_j}_i^3 \ \cross{\cross{B}_j\ \cross{C}_j}_i^3\ }_i\ }_j^3$ &[Applying Eq. (\ref{eqdistbf3}), $\alpha = i$] \\[5pt]
$= \cross{\cross{\cross{A}_j\ \cross{C}_j}_i^3 \ \cross{\cross{B}_j\ \cross{C}_j}_i^3\ }_k^3$ &[$i\times -j= - k$] \\[5pt]
$= \cross{\cross{\cross{\cross{A}_j\ \cross{C}_j}_j^3}_k \ \cross{\cross{\cross{B}_j\ \cross{C}_j}_j^3}_k\ }_k^3$ &[$-i = -j \times k$] \\[5pt]
$= (A \wedge_j B ) \wedge_k (B \wedge_j C)$ &[Translation]\\[10pt]
\end{tabular}\\[5pt]

We note the Q Calculus is technically isomorphic to the 16-valued lattice described by Shrampko and Wansing.\cite{BILx6} However the character and instantiation of these two calculi are vastly different. The particular power of the Q formulation derives from its connection to the quaternions and from the fact that every expression in Q resolves directly  as LoF equations within the n-tuples.

\section{Permutations}\label{sec50}
As suggested in Section \ref{sec41}, permutations play a significant role in $Q$. In this section we examine the permutations that generate the Klein Four Group, which is closely associated with the structure of the quaternions. 

In Figure \ref{fig:ff1} we see a diagrammatic representation of the Klein Four Group. The group action occurs via the composing of permutations. As an example, in Figure \ref{fig:AAE} we see the composition of A with itself, which is equal to the identity $E$. It is not shown in the figures, but $BB = E$ and $CC = E$, so each permutation is its own inverse. 

 \begin{figure}[htb]
    \centering
    \includegraphics[width=2.5in]{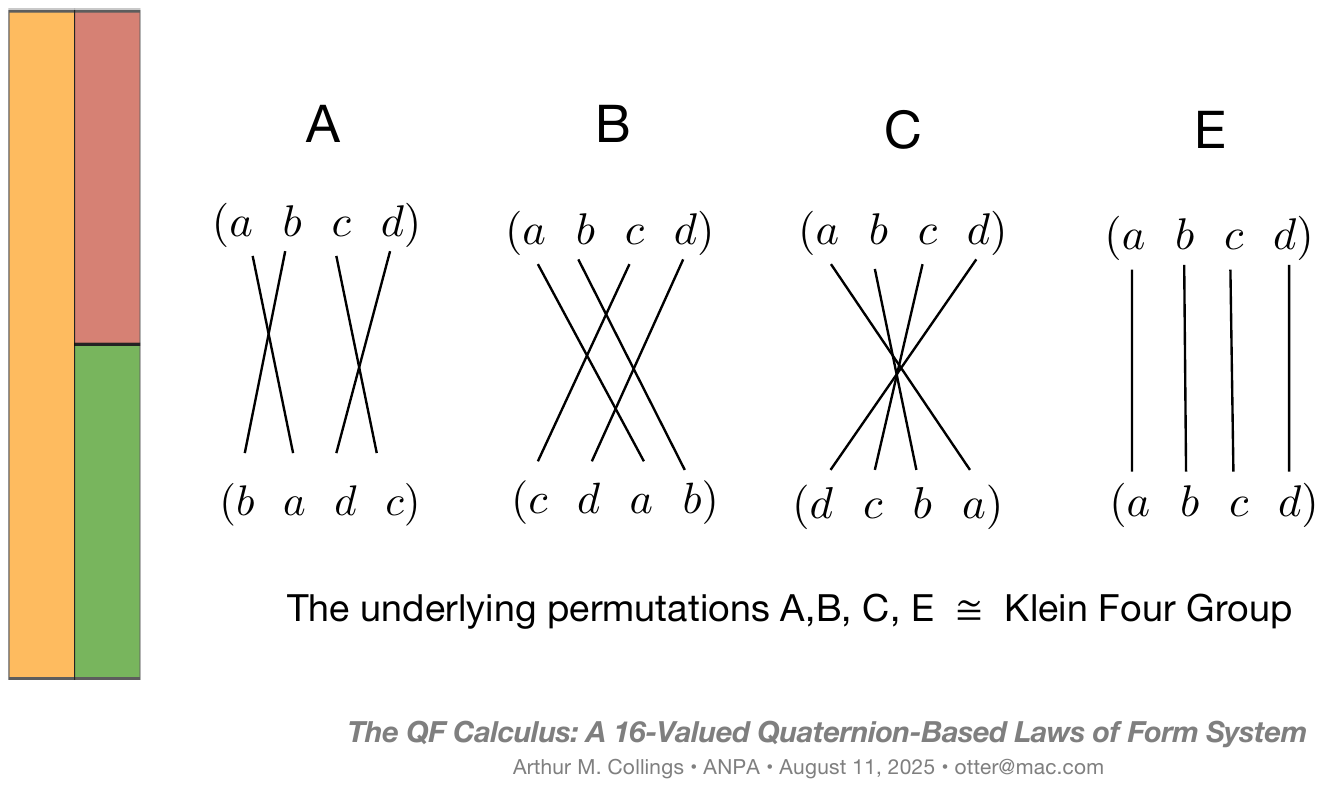}
    \caption{The permutations A, B, C form a group with identity E, that is isomorphic to the Klein Four Group.}
    \label{fig:ff1}
\end{figure}
 \begin{figure}[htb]
    \centering
    \includegraphics[width=1.75in]{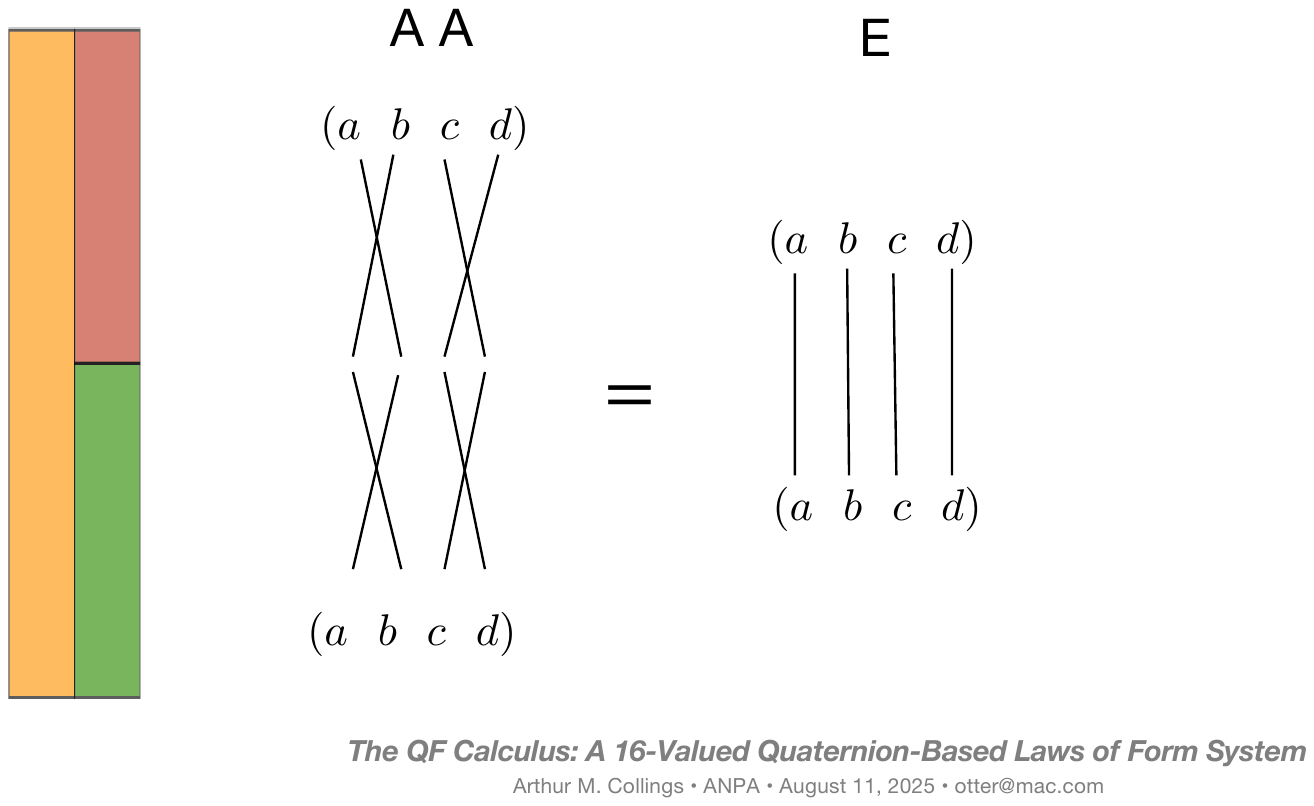}
    \caption{AA = E: Permutation A composed with itself results in the identity E.}
    \label{fig:AAE}
\end{figure}

We also find that $AB = C$ (as shown in Figure \ref{fig:ABC}), $AC = B$, and $BC = A$, and also that $AB = BA$, $AC = CA$, and $BC = CB$. 
 \begin{figure}[H]
    \centering
    \includegraphics[width=1.75in]{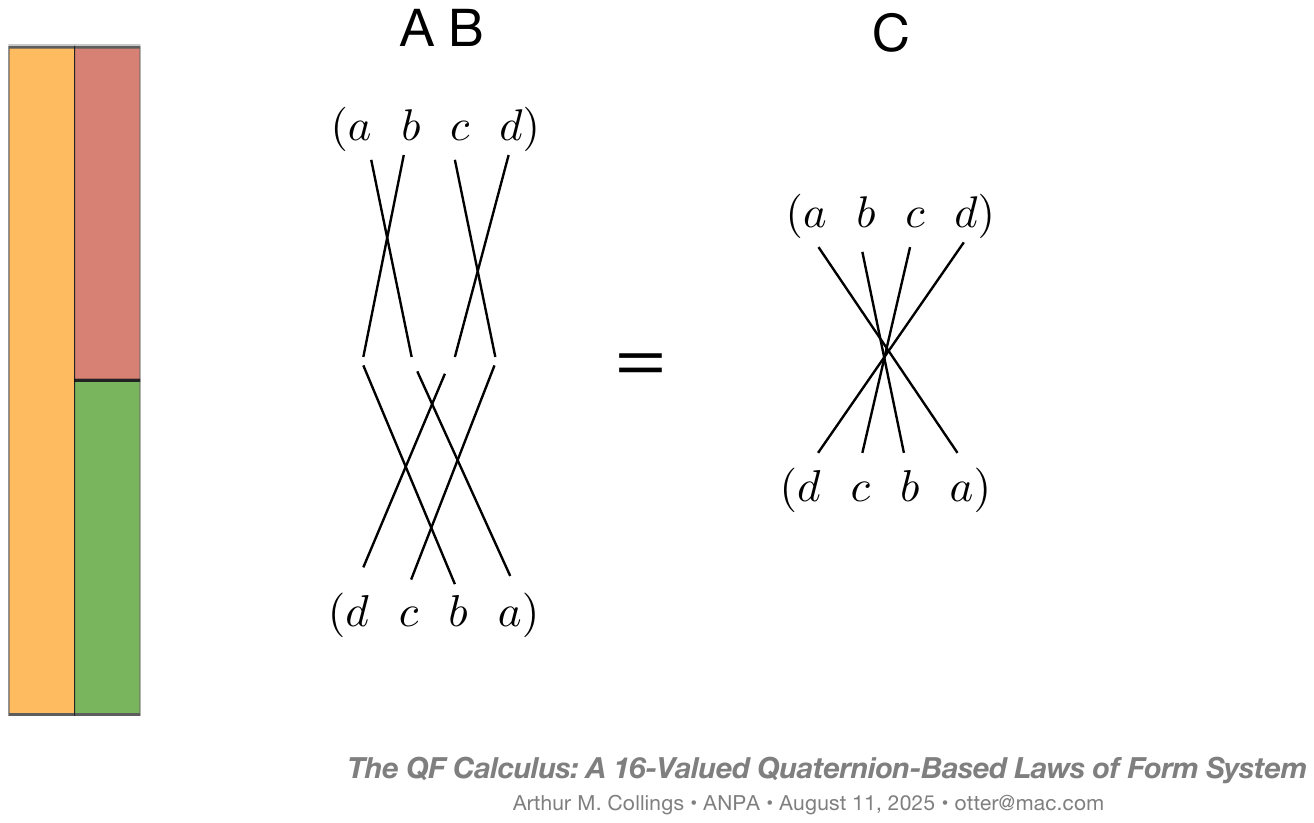}
    \caption{The composition AB = C. It's also the case that AC = B and BC = A. }
    \label{fig:ABC}
\end{figure}
 \begin{figure}[H]
    \centering
    \includegraphics[width=2.5in]{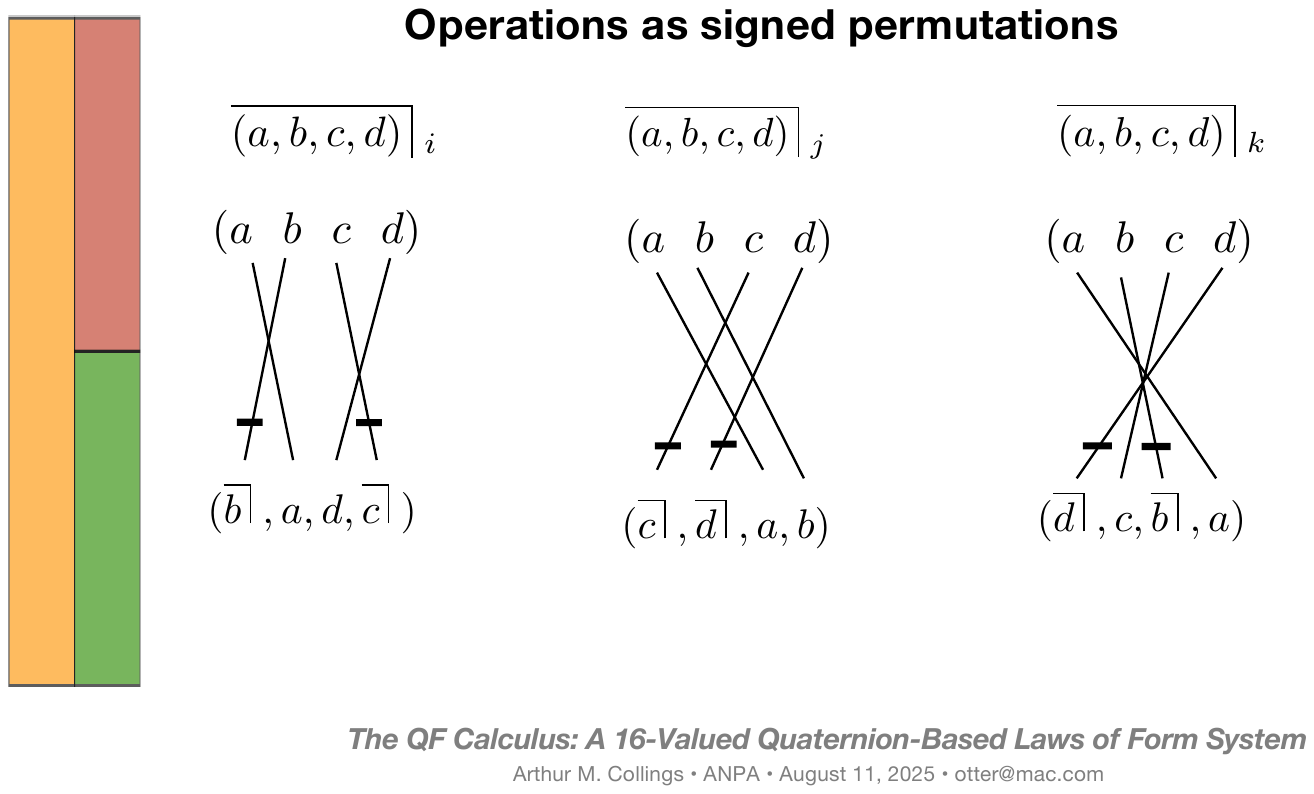}
    \caption{The $Q$ operators are expressed as signed permutations. The algebraic and diagrammatic forms match closely.}
    \label{fig:ff2}
\end{figure}
 In the Figure \ref{fig:ff2} we show the $Q$ quaternion operations $\cross{\ \ }_i$, $\cross{\ \ }_j$, and $\cross{\ \ }_k$ as signed permutations. The reader can observe that the permutations underlying the quaternion operations directly correspond to $A,\ B, \ C$ in Figure \ref{fig:ff1} and that a dark bar on a permutation line corresponds to the Spencer-Brown cross so that a signal going through a bar is operated on by the mark.
  \begin{figure}[htb]
    \centering
    \includegraphics[width=1.75in]{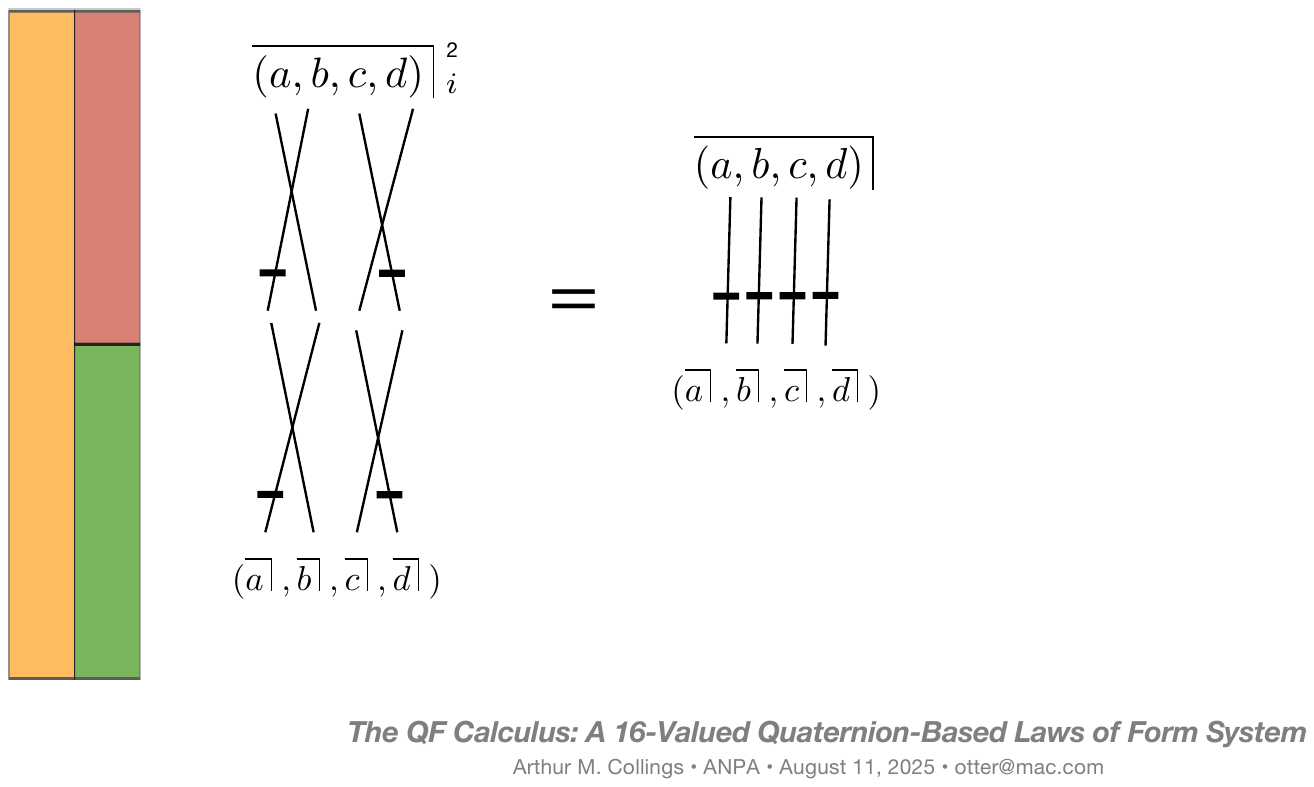}
    \caption{The composition $\cross{(a,b,c,d) }_i^2 = \cross{(a,b,c,d)}. $}
    \label{fig:IIM}
\end{figure}
  \begin{figure}[htb]
    \centering
    \includegraphics[width=2.25in]{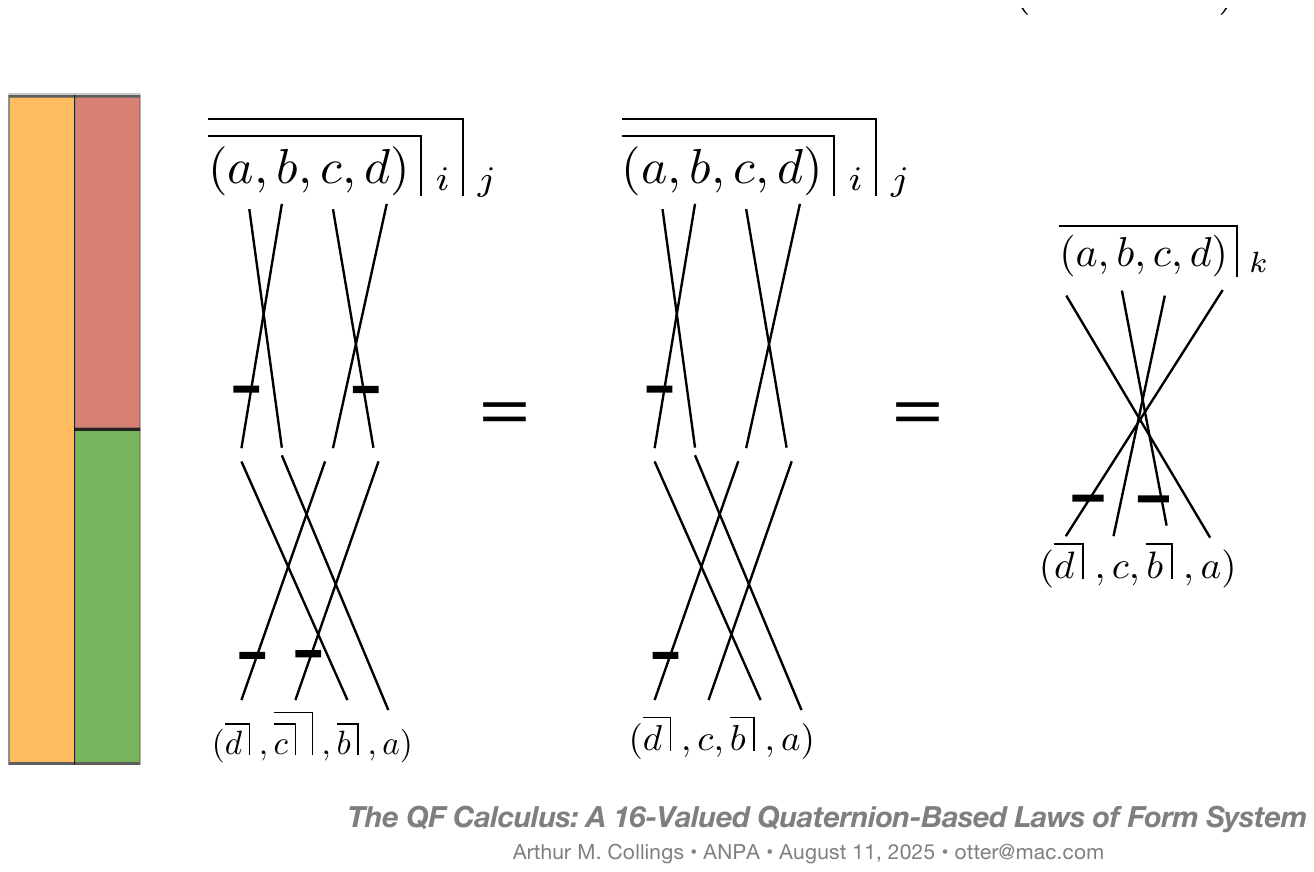}
    \caption{Illustrating that $ij = k$. Note the c-strand has two signs, which cancel in accord with $\cross{\cross{c}} = c$, within LoF.}
    \label{fig:IJJI}
\end{figure}

The diagram in Figure \ref{fig:IIM} shows the composition of $\cross{\cross{\ \ }_i}_i = \cross{\ \ }$ , while  Figure \ref{fig:IJJI} shows the composition $\cross{\cross{\ \ }_i}_j = \cross{\ \ }_k$.

 \begin{figure}[H]
    \centering
    \includegraphics[width=2.5in]{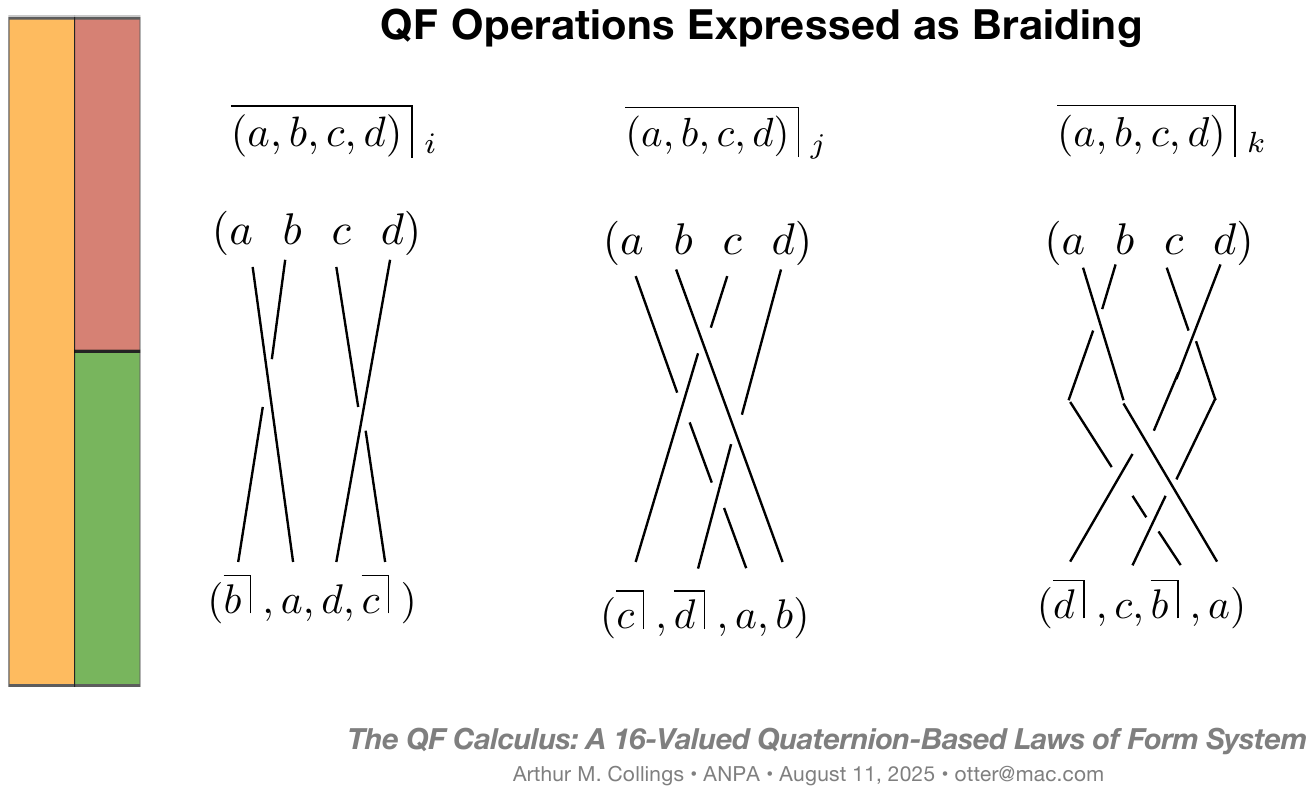}
    \caption{In the diagram, the Q quaternion operators are expressed as braid crossings.}
    \label{fig:ff3}
\end{figure}

 \begin{figure}[H]
    \centering
    \includegraphics[width=3.75in]{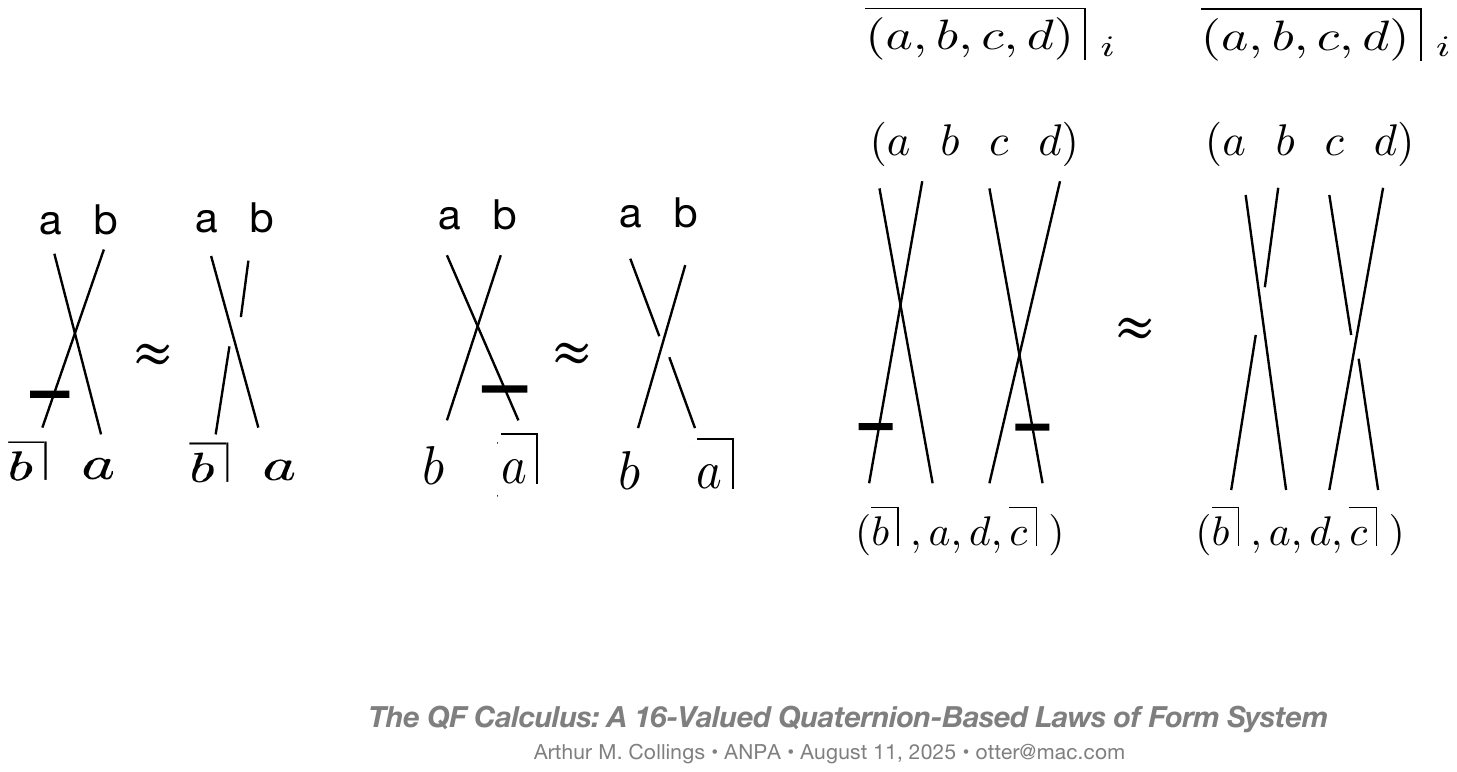}
    \caption{Signed Permutation vs. Braided Representations: The bar indicates a marked tuple, which corresponds to an under-crossing in the braided representation.}
    \label{fig:ff4}
\end{figure}

In Figure \ref{fig:ff3}, the operations 
$\cross{\ \ }_i$, $\cross{\ \ }_j$, and $\cross{\ \ }_k$ are expressed as braiding operations. Figure \ref{fig:ff4} highlights the correspondence between marked strands and under-crossings. The three quaternion operations in Figure \ref{fig:ff3} can be represented as braid moves:

\begin{itemize}
    \item $\cross{(a,b,c,d)}_i \  \longleftrightarrow \  \sigma_1 \sigma_3^{-1}$\\
    \item $\cross{(a,b,c,d)}_j \ \longleftrightarrow \  \sigma_2 \sigma_1^{-1} \sigma_3 \sigma_2^{-1}$\\
   \item $\cross{(a,b,c,d)}_k \ \longleftrightarrow \  \sigma_1 \sigma_3^{-1} \sigma_2 \sigma_1^{-1} \sigma_3 \sigma_2^{-1}$
\end{itemize}
where $\sigma_1$ indicates that the 1st strand crosses $over$ the 2nd , while $\sigma_3^{-1}$ indicates that the 3rd strand crosses $under$ the 4th. The braid words above for the quaternion operators are exactly a transcription of the braids drawn in Figure \ref{fig:ff3}.

We suggest that the reader carefully follow the braid moves while referring to the  diagrams in Figure \ref{fig:ff3}. Once a degree of familiarity is obtained one can use the braiding moves as a way to multiply the quaternion operations, for example by confirming the composition $ij = k$.   \\

The Artin Braid  group on n strands\cite{Braid1, Braid2} is defined algebraically by  the generators $\sigma_1, \sigma_2, \dots \sigma_{n-1}$ and the relations\\

\noindent
\emph{Artin Braid Group - Relations}
\begin{enumerate}
    \item $\sigma_i\sigma_j = \sigma_j\sigma_i \ for\ |i - j| > 1$, 
    \item $\sigma_i\sigma_j\sigma_i = \sigma_j\sigma_i\sigma_j\ for\ |i - j| = 1$.\\
\end{enumerate}
   Figure \ref{fig:sigman1}  illustrates braid relation 2 in the form  $\sigma_1^{-1} \sigma_2^{-1} \sigma_1^{-1} = \sigma_2^{-1} \sigma_1^{-1} \sigma_2^{-1}. $ 
  \begin{figure}[htb]
    \centering
    \includegraphics[width=2.25 in]{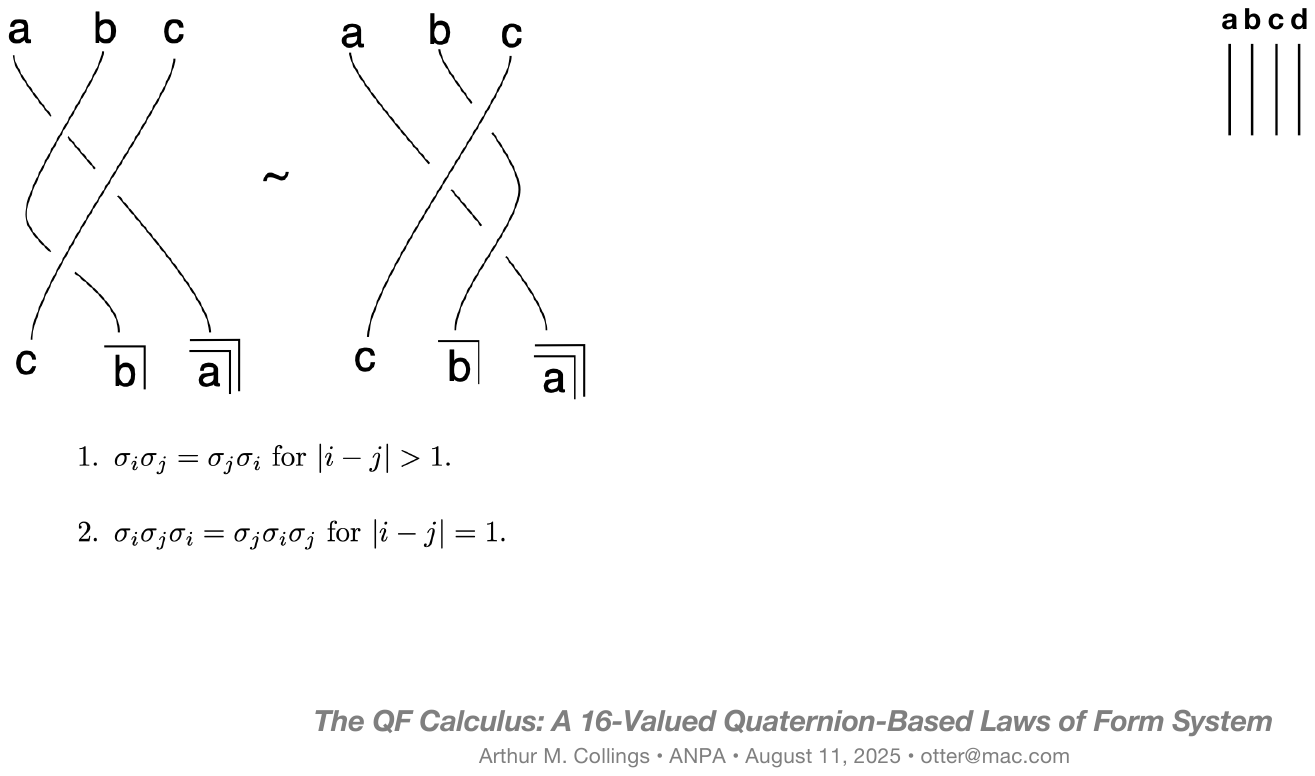}
    \caption{The braid relation $\sigma_1 \sigma_2 \sigma_1 = \sigma_2 \sigma_1 \sigma_2$ }
    \label{fig:sigman1}
\end{figure}\\ 

We will define a representation of $B_n$ as operations on LoF n-tuples by 

\begin{equation}
    (a_1,a2, \dots,a_{n-1},a_n)^{\sigma_k} = (a_1, \dots a_{k-1},\cross{a_{k+1}}, a_k, a_{k+1}, \dots a_{n-1}, a_n) 
\end{equation}\\

In contrast to the Artin Braid group,  the LoF representation incorporates the square root of negation and is finite. See Figure \ref{fig:sigman4}. \\

\noindent
\emph{ LoF Braid Group Representation}
\begin{enumerate}
    \item $   \cross{A}\sigma_k = (a_1,\ a_2,\ \dots \ , \cross{a_{k+1}},\ a_{k},\ \dots\  ,a_n),$
    \item $   \cross{A}\sigma_k^2 = (a_1,\ a_2,\ \dots \ , \cross{a_k},\ \cross{a_{k+1}},\ \dots\  ,a_n),$
    \item $\sigma_k^4 = 1$,
    \item $\sigma_i\sigma_j = \sigma_j\sigma_i \ for\ |i - j| > 1$, 
    \item $\sigma_i\sigma_j\sigma_i = \sigma_j\sigma_i\sigma_j\ for\ |i - j| = 1$.\\
\end{enumerate}

The LoF braid group representation is the context for our 4-strand quaternion representation. The  reader can verify that this operation satisfies the standard braiding relations and that $\sigma_k^2$ acts to cross just the elements in the tuple at $k$ and {k+1} so that $\sigma_i^4 = 1$. The reader may also enjoy confirming the basic quaternion operations by using braiding relations to prove them as transformations. The authors find it illuminating that the BF operator is a  braid.

As shown in Figure \ref{fig:sigman2}, the braid notation forms the basis to generalize Q to  orders of $n$ braid strands. In this figure, strands $k$ and $k+1$ are subject to the operation $\sigma_k$, which we are representing in LoF\textquotesingle s contaiment notation by $\cross{A}\sigma_k$. Note that the $k$ and $k+1$ braids are the only ones being operated on, as is made very clear in the corresponding n-tuple. 

  \begin{figure}[htb]
    \centering
    \includegraphics[width=3.0 in]{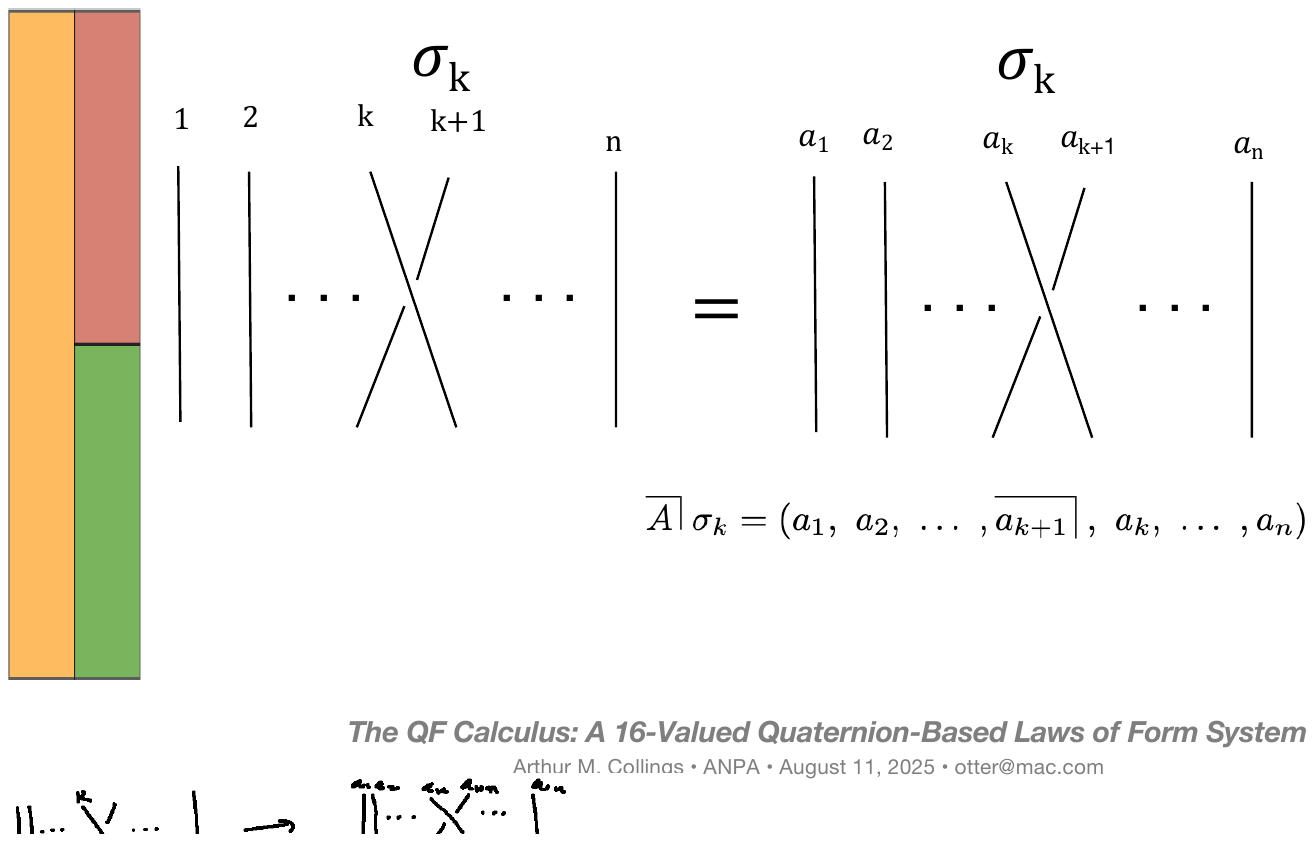}
    \caption{LoF braid representation, generalized to n strands. }
    \label{fig:sigman2}
\end{figure}

Also note that that the notation for $\sigma_k$ and $\sigma_k^{-1}$ are closely related, as can be seen in Equations \ref{eq34} and \ref{eq35}.

\begin{equation}\label{eq34}
    \cross{A}\sigma_k = (a_1,\ a_2,\ \dots \ , \cross{a_{k+1}},\ a_k,\ \dots\  ,a_n)
\end{equation}
\begin{equation}\label{eq35}
    \cross{A}\sigma_k^{-1} = (a_1,\ a_2,\ \dots \ , \ a_{k+1},\ \cross{a_k},\ \dots\  ,a_n)
\end{equation}
If instead $\cross{A}\sigma^{-1}$ is called for, as it is in Equation \ref{eq35}, then the diagram is adjusted so that braid $k$ crosses under $k+1$, as shown in Figure \ref{fig:sigman3}. 

  \begin{figure}[htb]
    \centering
    \includegraphics[width=3.0 in]{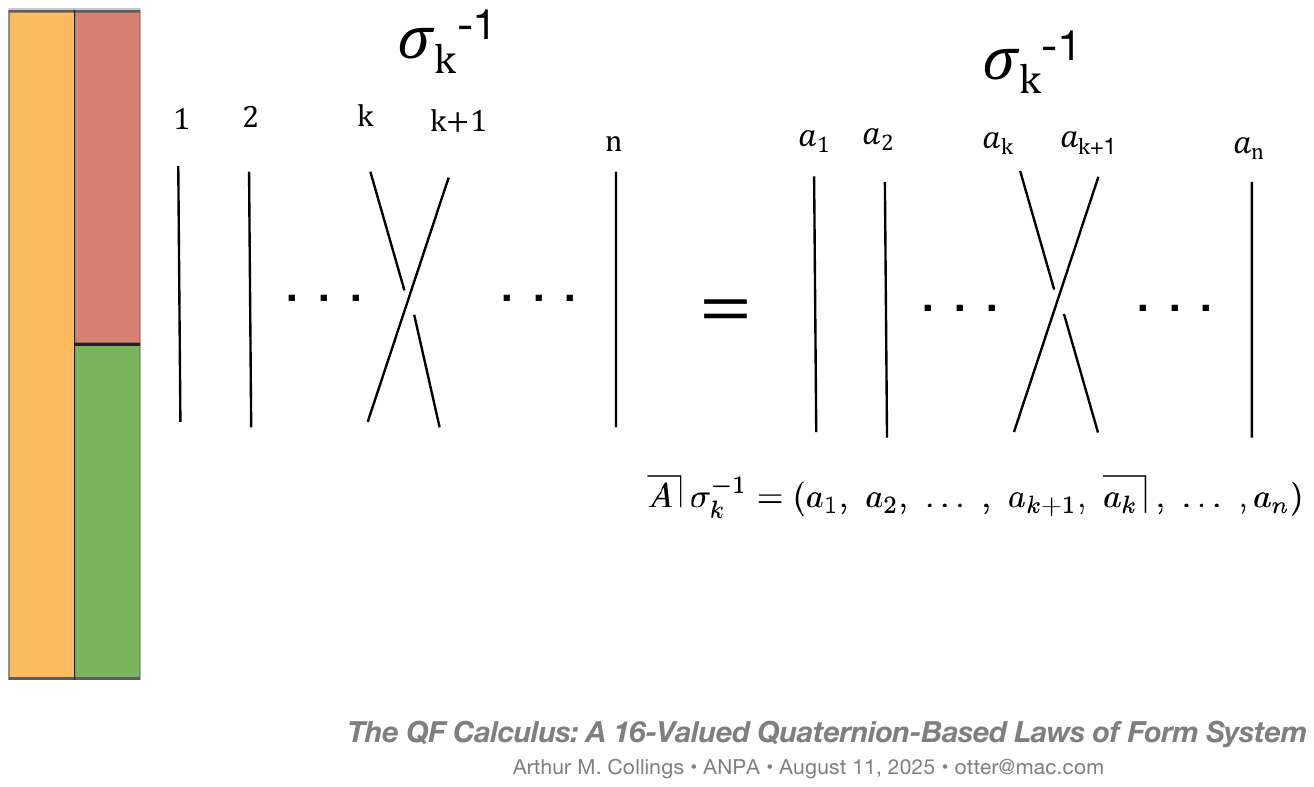}
    \caption{Positions of braid k (undercrossing) and braid k+1 (overcrossing) are reversed from Figure \ref{fig:sigman2}, since the operation $\cross{A}\sigma_k^{-1}$ rather than $\cross{A}\sigma_k$ is now represented. }
    \label{fig:sigman3}
\end{figure}

Figure \ref{fig:sigman4} illustrates the incorporation of the square root of the mark in our representation of the braid group. We encourage the reader to review the diagrams for themselves, and in particular to confirm that the braid operations shown in Figure \ref{fig:ff3} correspond to the quaternion operations as we have defined them. \\

  \begin{figure}[htb]
    \centering
    \includegraphics[width=2.75 in]{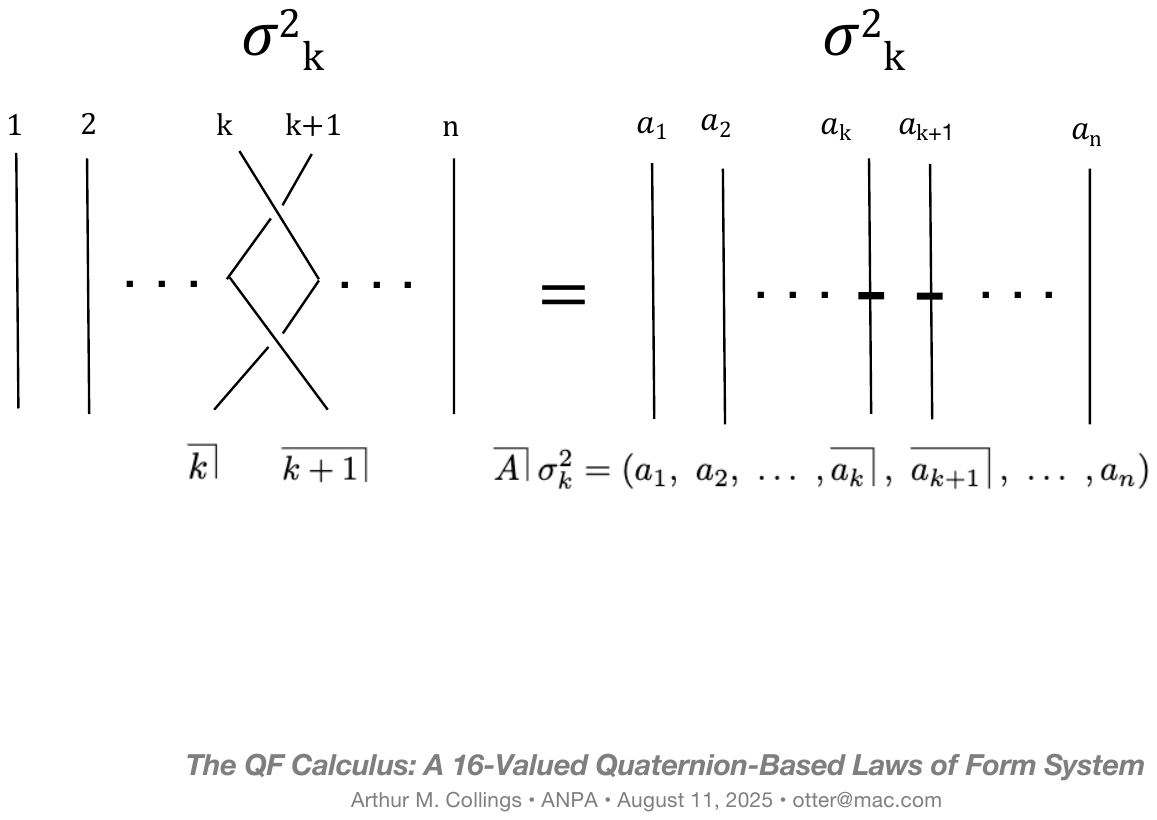}
    \caption{Applying the $\sigma_k$ operation twice ($\cross{A}\sigma_k^2$) results in marking both the k and k+1 braid strands. Applying $\sigma_k$ four times marks the k and k+1 twice each, which cancel, giving $\cross{A}\sigma_k^4 = 1$. }
    \label{fig:sigman4}
\end{figure}

An extremely pragmatic aspect of the Q calculus is the power it affords to access the LoF algebra within the n-tuples on an underlying basis.  By representing braids as operations on n-tuples we have obtained an LoF instantiation of the n-strand braid group that is analog to our quaternion representation. Diagrams \ref{fig:ff1}-\ref{fig:sigman4} are meant to give the reader a deeper, intuitive sense of the interoperability of the quaternion and braiding operations. There is much more to say about the connection  between \emph{Laws of
Form}, braiding, and the quaternions\cite{kau2, kau5}. Of particular interest are establishing  algebraic completeness for Q, extending its representation of modal operators, and exploring the degree to which the concepts of nilpotency\cite{zinfinity} and a calculus of fermions\cite{kau3, kau9} are  embedded in its logical structure.  We anticipate taking up these ideas in a longer paper focused  on algebraics and  extensions to Clifford Algebras\cite{Braid3} and other hypercomplex systems.  \\

\newpage

\appendix 

\noindent
$\mathbf{Appendix}$\\

The first two sections of this appendix consist of lists of initials and consequences for LoF and Q. The astute reader will observe that we have not chosen to distinguish initials (axioms) from consequences.  What is important to emphasize is that all initials and consequences for LoF, including the 10 enumerated below, are valid in the Q Calculus, based on the understanding that variables $A,B,C$ each represent four-tuples, e.g. $A = (a_1, a_2, a_3, a_4)$, $B = (b_1, b_2, b_3, b_4)$, and $C = (c_1, c_2, c_3, c_4)$. \\

The second section consists of initials and consequences that are valid in Q but not (in general) in LoF. \\

Finally, in the third section we illustrate via examples that expressions can be constructed subjecting an arbitrary tuple to a mark, as well as subjecting a given 4-tuple to an arbitrary permutation.

\section{Initials and Consequences -- Valid in both LoF and Q} \label{App:AppendixA}

\begin{tabular}{@{\hspace{0ex}}l l }
$\mathbf{1.}\ \  \cross{\cross{A}\ A } = \quad \quad .\ $ &(Position) \\[5pt]
$\mathbf{2.}\ \ \cross{\cross{A} \cross{B}}C = \cross{\cross{AC} \cross{BC}}$ &(Transposition) \\[5pt]
$\mathbf{3.} \ \ \cross{\cross{A}} =  A$ &(Reflexion) \\[5pt]
$\mathbf{4.} \ \  \cross{A}B =  \cross{AB}B$ &(Generation) \\[5pt]
$\mathbf{5.} \ \ A\ \cross{\ \ } =  \cross{\ \ }$ &(Integration) \\[5pt]
$\mathbf{6.} \ \ \cross{\cross{A} B}A =  A$ &(Occultation) \\[5pt]
$\mathbf{7.} \ \ A\ A =  A $ &(Iteration) \\[5pt]
$\mathbf{8.} \ \ \cross{\cross{A}\cross{B}}\  \cross{\cross{A} B} =  A $ &(Extension) \\[5pt]
$\mathbf{9.} \ \ \cross{\cross{\cross{A}B}C} = \cross{AC}\ \cross{\cross{B}C}  $ &(Echelon) \\[5pt]
$\mathbf{10.} \ \ \cross{\cross{\cross{A}B}\  \cross{\cross{A}\cross{B}}} = \cross{AB} \cross{A\cross{B}} $ &(Crosstransposition) \\[5pt]
\end{tabular}\\[10pt]

\newpage
\section{Initials and Consequences -- Valid  in Q Only}
\noindent
(where $\alpha,\ \beta \in \{i,j,k\})$\\

\begin{tabular}{@{\hspace{0ex}}l l }
$\mathbf{Q1.} \ \ \cross{\cross{A}_\alpha }_\alpha = \cross{A}  $ &(SQR \cross{\ \ }) \\[5pt]
$\mathbf{Q2.} \ \ \cross{\cross{\cross{A}_i}_j }_k = \cross{A}  $ &(IJK) \\[5pt]
$\mathbf{Q3.} \ \ \cross{A}_\alpha^4  = A  $ &(Quadra Reflexion) \\[5pt]
$\mathbf{Q4.} \ \ \cross{\cross{A}_\alpha}  = \cross{\cross{A}}_\alpha   $ &(\cross{\ \ } Commutes) \\[5pt]
$\mathbf{Q5.} \ \ \cross{\cross{A}_\alpha}_\beta  = \cross{\cross{\cross{A}_\beta}_\alpha }  $ &$(\cross{\ \ }_{\alpha \beta}$ Anti-commutes) \\[5pt]
$\mathbf{Q6.} \ \ \cross{\cross{A}_\alpha \ B}_\alpha\ C  =  \ \ \cross{\cross{AC}_\alpha \ B}_\alpha\ C  $ &(Split Generation) \\[5pt]
$\mathbf{Q7.} \ \ \cross{A\ \cross{\ \ }_\alpha}_\alpha    = \cross{A}_\alpha \ \cross{\ \ }_\alpha^3 $ &(Extraction) \\[5pt]
$\mathbf{Q8.} \ \ \cross{AB}_\alpha    = \cross{\cross{\cross{A}_\alpha \cross{B}_\alpha} \ \cross{\cross{A}_\alpha \ \cross{\ \ }_\alpha^3}\ \cross{\cross{B}_\alpha \cross{\ \ }_\alpha^3 } }$ &(Disintegration) \\[5pt]
$\mathbf{Q9.} \ \ \cross{ \cross{\ A}_\alpha^3 \ \cross{\ B}_\alpha^3 \ }_\alpha \ C = \cross{ \cross{\ A\ C}_\alpha^3 \ \cross{\ B\ C}_\alpha^3 \ }_\alpha $ &(Right Distribution) \\[5pt]
$\mathbf{Q10.} \ \ \cross{ \cross{\ A}_\alpha^3 \ \cross{\ B}_\alpha^3 \ }_\alpha \ C = \cross{ \cross{\ A\ C}_\alpha^3 \ \cross{\ B\ C}_\alpha^3 \ }_\alpha $ &(Left Distribution) \\[5pt]
$\mathbf{Q11.} \ \ \cross{ \cross{\ \ }_i \cross{\ \ }_j} \cross{ \cross{\ \ }_i^3\ \cross{\ \ }_j^3} = \cross{\ \ }_k $ &(Compile-k) \\[5pt]
$\mathbf{Q12.} \ \ \cross{ \cross{\ \ }_j \cross{\ \ }_k} \cross{ \cross{\ \ }_j^3\ \cross{\ \ }_k^3} = \cross{\ \ }_i $ &(Compile-i) \\[5pt]
$\mathbf{Q13.} \ \ \cross{ \cross{\ \ }_i \cross{\ \ }_k} \cross{ \cross{\ \ }_i^3\ \cross{\ \ }_k^3} = \cross{\ \ }_j $ &(Compile-j) \\[5pt]
\end{tabular}\\[5pt]
Note that equations Q11-Q13 are strictly arithmetic. They apply only to equations with  empty marks $\cross{\ \ }_i,\ \cross{\ \ }_j$ and $\cross{\ \ }_k$, and not more generally to equations that include $\cross{X}_i,\ \cross{X}_j$, or $\cross{X}_k$

\section{Example Constructions in Q}

The two demonstrations in this section illustrate the existence of the following types of expressions in Q: 
\begin{enumerate}
    \item Expressions that enclose an arbitrary single tuple within a mark. 
    \item Expressions that subject tuples (a,b,c,d) to an arbitrary permutation.  
\end{enumerate}
In these expressions, the marking pattern    interferes with the input values of the expression. For example, the combination $\cross{\ \ }_i\ \cross{\ \ }_j = (\cross{\ }, \ , \ , \cross{\ })\ (\cross{\ }, \cross{\ }, \ , \ ) = (\cross{\ }, \cross{\ }, \ , \cross{\ } )$ creates an interference pattern that blocks $a,b,d$ but not $c$. For simplicity of reference in the demonstration, we refer to these patterns as: $IJ = \cross{\ \ }_i\ \cross{\ \ }_j$, $IK = \cross{\ \ }_i\ \cross{\ \ }_k$, $ JK = \cross{\ \ }_j\ \cross{\ \ }_k$, and $ I^3J^3 = \cross{\ \ }_i^3\ \cross{\ \ }_j^3$. As discussed in earlier in the paper, these forms of juxtaposition are only given meaning with the formal introduction of 4-tuple operators as defined in Section \ref{sec333}. \\

\noindent
$Example\ 1:\ \cross{X\ \cross{\ \ }_i\ \cross{\ \ }_j}\ \cross{\cross{X} \cross{\cross{\ \ }_i\ \cross{\ \ }_j} } $\\[5pt]
\begin{tabular}{@{\hspace{0ex}}l l }
$= \cross{(a,b,c,d)\  (\cross{\  }, \  ,\ , \cross{\  })\  (\cross{\  }, \cross{\ }  ,\ , \ )}\ \ \cross{\cross{(X)} \cross{\cross{\ \ }_i\ \cross{\ \ }_j} } $ &(IJ) \\[5pt]
$= \cross{(a,b,c,d)\   (\cross{\  }, \cross{\ }  ,\ , \cross{\  }) }\ \ \cross{\cross{(a,b,c,d)} \cross{\cross{\ \ }_i\ \cross{\ \ }_j} } $ &(Def. \ref{def2}, Calling) \\[5pt]
$= \cross{(\cross{\  }, \cross{\ }  ,c\ , \cross{\ } )}\ \ \cross{\cross{(a,b,c,d)} \cross{\cross{\ \ }_i\ \cross{\ \ }_j} } $ &(Def. \ref{def2}, Int.) \\[5pt]
$= (\ ,\ , \cross{c},\ )\ \ \cross{\cross{(a,b,c,d)}\  \cross{\cross{\ \ }_i\ \cross{\ \ }_j} } $ &(Def. 1(\ref{eqcross})) \\[5pt]
$= (\ ,\ , \cross{c},\ )\ \ \cross{\cross{(a,b,c,d)}\  \cross{ (\cross{\  }, \  ,\ , \cross{\  })\  (\cross{\  }, \cross{\ }  ,\ , \ )} } $ &(IJ) \\[5pt]
$= (\ ,\ , \cross{c},\ )\ \ \cross{\cross{(a,b,c,d)}\  \cross{ (\cross{\  }, \cross{\ }  ,\ , \cross{\  })} } $ &(Def. \ref{def2}, Calling) \\[5pt]
$= (\ ,\ , \cross{c},\ )\ \ \cross{(\cross{a},\cross{b},\cross{c},\cross{d})\  (\ , \ , \cross{\  },\  ) } $ &(Def. 1(\ref{eqcross}), Ref.) \\[5pt]
$= (\ ,\ , \cross{c},\ )\ \ \cross{(\cross{a},\cross{b},\cross{\ },\cross{d})\  } $ &(Def. \ref{def2}, Int.) \\[5pt]
$= (\ ,\ , \cross{c},\ )\ \ \ (a,b,\ ,d) $ &(Def. 1(\ref{eqcross}), Ref.) \\[5pt]
$= (a,b,\cross{c}  ,d)  $ &(Def. \ref{def2}) \\[5pt]
\end{tabular}\\[5pt]

\newpage
\noindent
$Example\ 2:\ \cross{\cross{X}\ \cross{\ \ }_i^3\ \cross{\ \ }_j^3}\ \cross{\cross{X}_j \ \cross{\ \ }_i\ \cross{\ \ }_k} \ \cross{\cross{X}_i \cross{\ \ }_j\ \cross{\ \ }_k }\ \cross{\cross{X}_k \cross{\ \ }_i\ \cross{\ \ }_j } $\\[10pt]
\begin{tabular}{@{\hspace{0ex}}l l }
$= \cross{(\cross{a}, \cross{b}, \cross{c}, \cross{d}) (\ , \cross{\ },\cross{\  }, \cross{\ })}\  \cross{\cross{X}_j  \cross{\ \ }_i \cross{\ \ }_k} \ \cross{\cross{X}_i \cross{\ \ }_j\ \cross{\ \ }_k } \cross{\cross{X}_k \cross{\ \ }_i \cross{\ \ }_j } $ &($I^3J^3$) \\[10pt]
$= \cross{(\cross{a}, \cross{\ },\cross{\  }, \cross{\ })}\  \cross{\cross{X}_j \ \cross{\ \ }_i\ \cross{\ \ }_k} \ \cross{\cross{X}_i \cross{\ \ }_j\ \cross{\ \ }_k }\ \cross{\cross{X}_k \cross{\ \ }_i\ \cross{\ \ }_j } $ &(Def. \ref{def2}) \\[10pt]
$= \cross{(\cross{a}, \cross{\ },\cross{\  }, \cross{\ })}\  \cross{(\cross{c}, \cross{d}, a,b) \ (\cross{\ }, \ ,\cross{\ }, \cross{\ })} \ \cross{\cross{X}_i \cross{\ \ }_j\ \cross{\ \ }_k }\ \cross{\cross{X}_k \cross{\ \ }_i\ \cross{\ \ }_j } $ &($\cross{\ }_j$, IK) \\[10pt]
$= \cross{(\cross{a}, \cross{\ },\cross{\  }, \cross{\ })}\  \cross{(\cross{\ }, \cross{d}, \cross{\ },\cross{\ }) \ } \ \cross{\cross{X}_i \cross{\ \ }_j\ \cross{\ \ }_k }\ \cross{\cross{X}_k \cross{\ \ }_i\ \cross{\ \ }_j } $ &(Def. \ref{def2}) \\[10pt]
$= \cross{(\cross{a}, \cross{\ },\cross{\  }, \cross{\ })}\  \cross{(\cross{\ }, \cross{d}, \cross{\ },\cross{\ }) \ }  \cross{(\cross{b}, a, d, \cross{c}) (\cross{\  },  \cross{\ }, \cross{\ }, \ ) } \cross{\cross{X}_k \cross{\ \ }_i \cross{\ \ }_j } $ &($\cross{\ }_i$, JK) \\[10pt]
$= \cross{(\cross{a}, \cross{\ },\cross{\  }, \cross{\ })}\  \cross{(\cross{\ }, \cross{d}, \cross{\ },\cross{\ }) \ } \ \cross{ (\cross{\  },  \cross{\ }, \cross{\ }, \cross{c} ) }\ \cross{\cross{X}_k \cross{\ \ }_i\ \cross{\ \ }_j } $ &(Def. \ref{def2}) \\[10pt]
$= (a, \ ,\ , \ )\  (\ , d, \ ,\ )  \  (\ ,  \ , \ , c ) \ \ \cross{\cross{X}_k \cross{\ \ }_i\ \cross{\ \ }_j } $ &(Def. 1(\ref{eqcross})) \\[10pt]
$= (a, \ ,\ , \ )\  (\ , d, \ ,\ )  \  (\ ,  \ , \ , c ) \ \ \cross{(\cross{d}, c, \cross{b}, a)\  (\cross{\ }, \cross{\ }, \ , \cross{\ }) } $ &($\cross{\ }_k$, IJ) \\[10pt]
$= (a, \ ,\ , \ )\  (\ , d, \ ,\ )  \  (\ ,  \ , \ , c ) \ \ \cross{  (\cross{\ }, \cross{\ }, \cross{b} , \cross{\ }) } $ &(Def. \ref{def2}) \\[10pt]
$= (a, \ ,\ , \ )\  (\ , d, \ ,\ )  \  (\ ,  \ , \ , c ) \ \  (\ , \ , b , \ )  $ &(Def. 1(\ref{eqcross})) \\[10pt]
$= (a, d, b, c )   $ &(Def. \ref{def2}) \\[10pt]
\end{tabular}\\[5pt]

By considering these expressions, it is possible to see that more complex expressions can easily be constructed. For example, consider the conjunction of Examples 1 and 2: 
\begin{equation}
    \cross{\cross{(a,b,\cross{c},d)}\ \cross{(a,d,b,c)}}.
\end{equation}
 This simplifies within the 4-tuples as the following expressions in LoF. 
\begin{equation}
    (a, \cross{\cross{b}\cross{d}}, \cross{b}c, \cross{\cross{c}\cross{d}})
\end{equation}
which we leave as an exercise for the reader to confirm.

\bibliographystyle{ws-rv-van}
\bibliography{ws-rv}

@book{LOF1,
  author    =   "G. Spencer-Brown" ,
  title     =   "Laws of Form",
  publisher =   "Allen and Unwin Ltd.",
  address   =   "London",
  year      =   "1969"}

@book{zinfinity,
  author    =   "Peter Rowlands" ,
  title     =   "Zero to Infinity: The Foundations of Physics",
  publisher =   "World Scientific",
  address   =   "Singapore",
  year      =   "2007"}

@book{Mod1,
  author    =   "C. I. Lewis and C. R. Langford",
  title     =   "Symbolic Logic",
  publisher =   "Dover Publications, Inc.",
  address   =   "Reading, MA",
  year      =   "1969",
  edition   =   "2nd"}

@article{kau1,
    author  =   "L. H. Kauffman and F. J. Varela",
     title  =   "Form dynamics",
   journal  =   "J. soc. biol. Struct.",
  fjournal  =   "Journal for Social and
Biological Structures",
    volume  =   "3",
      year  =   "1980",
    number  =   "",
     pages  =   "171–206",
      issn  =   "0140-1750",
   mrclass  =   "",
  mrnumber  =   ""}

@book{kau2,
  author    =   "L. H. Kauffman",
  title     =   "Knots and Physics",
  publisher =   "World Scientific",
  address   =   "Singapore",
  year      =   "2001",
  edition   =   "3rd"}

@article{kau3,
    author  =   "L. H. Kauffman",
     title  =   "Majorana fermions and representations of the braid
group",
   journal  =   "Int. J. Mod. Phys. A",
  fjournal  =   "International Journal of Modern Physics A",
    volume  =   "33",
      year  =   "2018",
    number  =   "23",
     pages  =   "1-28",
      issn  =   "0217-751X",
   mrclass  =   "",
  mrnumber  =   ""}

@article{kau9,
    author  =   "L. H. Kauffman",
     title  =   "Paper Computers, Imaginary Values and the Emergence of Fermions ",
   journal  =   "Cybernetics and Human Knowing",
  fjournal  =   "Cybernetics and Human Knowing",
    volume  =   "26",
      year  =   "2019",
    number  =   "Nos. 2-3",
     pages  =   "107-160",
      issn  =   "",
   mrclass  =   "",
  mrnumber  =   ""}

@article{kau4,
    author  =   "L. H. Kauffman",
     title  =   "Self-reference and recursive forms",
   journal  =   "J. soc. biol. Struct.",
  fjournal  =   "Journal for Social and Biological Structures",
    volume  =   "10",
      year  =   "1987",
    number  =   "",
     pages  =   "53-72",
      issn  =   "0140-1750",
   mrclass  =   "",
  mrnumber  =   ""}

@inbook{kau5,
  author    =   "L. H. Kauffman",
  year      =   "1995",
  editor    =   "L.H. Kauffman",
  title     =   "Knot logic",
  booktitle =   "Knots and Applicationns",
  publisher =   "World Scientific",
  series    =   "Series on Knots and Everything",
  address   =   "Singapore",
  volume    =   "6",
  pages     =   "1-110"}

@unpublished{bfmodal,
  author    =   "A. M. Collings and L. H. Kauffman",
  title     =   "{The BF Calculus and Modal Logic }",
  note      =   "This paper expresses 16 normal modal logics within the BF Calculus",
  year      =   "2022 Unpublished"}

@inbook{BF3,
  author    =   "A. M. Collings and L. H. Kauffman",
  year      =   "2023",
  editor    =   "L. H. Kauffman and F. Cummins and R. Dibble and L. Conrad and G. Ellsbury and A. Compton and F. Grote",
  title     =   "The {BF} {C}alculus and the {S}quare {R}oof of {N}egation",
  booktitle =   "{Laws} of {F}orm: A {F}iftieth {A}nniversary",
  publisher =   "World Scientific",
  series    =   "Knots and Everything",
  volume    =   "72",
  pages     =   "253--283"}

@inbook{BF4,
  author    =   "A. M. Collings",
  year      =   "2023",
  editor    =   "L. H. Kauffman and F. Cummins and R. Dibble and L. Conrad and G. Ellsbury and A. Compton and F. Grote",
  title     =   "{T}he {BF} {C}alculus: {A} {C}omplete {F}our-{V}alued {E}xtension to {L}aws of {F}orm",
  booktitle =   "{Laws} of {F}orm: A {F}iftieth {A}nniversary",
  publisher =   "World Scientific",
  series    =   "Knots and Everything",
  volume    =   "72",
  pages     =   "199--252"}

@book{Mod5,
  author    =   "G. Boolos",
  title     =   "The Logic of Provabilty",
  publisher =   "Cambridge University
Press",
  address   =   "Cambridge",
  year      =   "1993",
  edition   =   ""}

@article{BILx6,
    author  =   "Y. Shramko and H. Wansing",
     title  =   "Some useful 16-valued logics: How a
computer network should think",
   journal  =   "J. Philos. Log.",
  fjournal  =   "Journal of Philosophical Logic",
    volume  =   "34",
      year  =   "2005",
    number  =   "",
     pages  =   "121-153",
      issn  =   "0022-3611",
   mrclass  =   "",
  mrnumber  =   ""}

@article{Braid1,
    author  =   "E. Artin",
     title  =   "Theory of Braids",
   journal  =   "Ann. Math.",
  fjournal  =   "Annals of Mathematics",
    volume  =   "48",
      year  =   "1947",
    number  =   "",
     pages  =   "101-126",
      issn  =   "0003-486X",
   mrclass  =   "",
  mrnumber  =   ""}

@book{Braid2,
  author    =   "J. Birman",
  title     =   "Braids, links and mapping class groups",
  publisher =   "Princeton University Press, Series: Annals of Mathematics Studies.",
  address   =   "Princeton",
  year      =   "1975",
  edition   =   ""}

@article{Braid3,
    author  =   "B. Schmeikal",
     title  =   "Four Forms Make a Universe",
   journal  =   "Adv. Appl. Clifford Algebr.",
  fjournal  =   "Advances in Applied
Clifford Algebras",
    volume  =   "26",
      year  =   "2016",
    number  =   "",
     pages  =   "889-911",
      issn  =   "0003-486X",
   mrclass  =   "",
  mrnumber  =   ""}

@article{BILx2,
    author  =   "M. Ginsburg",
     title  =   "Multivalued logics: a uniform approach to inference in
artificial intelligence",
   journal  =   "Comput. Intell.",
  fjournal  =   "Computational Intelligence",
    volume  =   "4",
      year  =   "1988",
    number  =   "3-4",
     pages  =   "265-316",
      issn  =   "0824-7935",
   mrclass  =   "",
  mrnumber  =   ""}

@inbook{BILx1,
  author    =   "N. Belnap",
  year      =   "1977",
  editor    =   "J. M. Dunn and G. Epstein",
  title     =   "A useful four-valued logic",
  booktitle =   "Modern Uses of Multiple-Valued Logic",
  publisher =   "D. Reidel",
  series    =   "",
  address   =   "Dordrecht",
  volume    =   "",
  pages     =   "7-37"}

@inbook{BILx4,
  author    =   "M. Fitting",
  year      =   "2006",
  editor    =   "T. Borland and S. A. Pederson",
  title     =   "Bilattices are nice things",
  booktitle =   "Self-Reference",
  publisher =   "Stanford Center for the Study of Language and Information",
  series    =   "",
  address   =   "Stanford",
  volume    =   "",
  pages     =   "53-77"}


\end{document}